\crefname{theorem}{Theorem}{Theorems}
\crefname{thm}{Theorem}{Theorems}
\crefname{lemma}{Lemma}{Lemmas}
\crefname{lem}{Lemma}{Lemmas}
\crefname{remark}{Remark}{Remarks}
\crefname{prop}{Proposition}{Propositions}
\crefname{defn}{Definition}{Definitions}
\crefname{corollary}{Corollary}{Corollaries}
\crefname{conjecture}{Conjecture}{Conjectures}
\crefname{question}{Question}{Questions}
\crefname{chapter}{Chapter}{Chapters}
\crefname{section}{Section}{Sections}
\crefname{figure}{Figure}{Figures}
\crefname{example}{Example}{Examples}
\theoremstyle{plain}
\newtheorem{thm}{Theorem}[section]
\newtheorem{lemma}[thm]{Lemma}
\newtheorem{theorem}[thm]{Theorem}
\newtheorem{corollary}[thm]{Corollary}
\newtheorem{prop}[thm]{Proposition}
\newtheorem{conjecture}[thm]{Conjecture}
\theoremstyle{definition}
\theoremstyle{remark}
\newtheorem{remark}[thm]{Remark}
\numberwithin{equation}{section}
\renewcommand{\P}{\mathbb P}
\newcommand{\E}{\mathbb E}
\newcommand{\Z}{\mathbb Z}
\newcommand{\eps}{\varepsilon}
\def\P{\mathbb{P}}
\DeclareMathSymbol{\leqslant}{\mathalpha}{AMSa}{"36} % nicer `smaller or equal'
\DeclareMathSymbol{\geqslant}{\mathalpha}{AMSa}{"3E} % nicer `larger or equal'
\DeclareMathSymbol{\eset}{\mathalpha}{AMSb}{"3F}     % nicer `emptyset'
\renewcommand{\epsilon}{\varepsilon}
\tikzset{nomorepostaction/.code=\let\tikz@postactions\pgfutil@empty}
\DeclareFontFamily{OMX}{MnSymbolE}{}
\DeclareSymbolFont{MnLargeSymbols}{OMX}{MnSymbolE}{m}{n}
\DeclareFontShape{OMX}{MnSymbolE}{m}{n}{
    <-6>  MnSymbolE5
   <6-7>  MnSymbolE6
   <7-8>  MnSymbolE7
   <8-9>  MnSymbolE8
   <9-10> MnSymbolE9
  <10-12> MnSymbolE10
  <12->   MnSymbolE12
}{}
\DeclareFontShape{OMX}{MnSymbolE}{b}{n}{
    <-6>  MnSymbolE-Bold5
   <6-7>  MnSymbolE-Bold6
   <7-8>  MnSymbolE-Bold7
   <8-9>  MnSymbolE-Bold8
   <9-10> MnSymbolE-Bold9
  <10-12> MnSymbolE-Bold10
  <12->   MnSymbolE-Bold12
}{}
\let\llangle\@undefined
\let\rrangle\@undefined
\DeclareMathDelimiter{\llangle}{\mathopen}%
                     {MnLargeSymbols}{'164}{MnLargeSymbols}{'164}
\DeclareMathDelimiter{\rrangle}{\mathclose}%
                     {MnLargeSymbols}{'171}{MnLargeSymbols}{'171}
\title{\bf Pointwise two-point function estimates and a non-pertubative proof of mean-field critical behaviour for long-range percolation} 
\renewenvironment{abstract}
 {\par\noindent\textbf{\abstractname.}\ \ignorespaces}
 {\par\medskip}
\author{{\bf Tom Hutchcroft}}
\begin{document}

\date{\small{\today}}

\maketitle

\setstretch{1.1}

\begin{abstract}
In long-range percolation on $\Z^d$, we connect each pair of distinct points $x$ and $y$ by an edge independently at random with probability  $1-\exp(-\beta\|x-y\|^{-d-\alpha})$, where $\alpha>0$ is fixed and $\beta\geq 0$ is a parameter. In a previous paper, we proved that if $0<\alpha<d$ then the critical two-point function satisfies the \emph{spatially averaged} upper bound
\[
\frac{1}{r^d}\sum_{x\in  [-r,r]^d} \P_{\beta_c}(0\leftrightarrow x) \preceq r^{-d+\alpha}
\]
for every $r\geq 1$. This upper bound is believed to be sharp for values of $\alpha$ strictly below the \emph{crossover value} $\alpha_c(d)$, and a matching lower bound  for $\alpha<1$ was proven by B\"aumler and Berger (AIHP 2022). In this paper, we prove \emph{pointwise} upper and lower bounds of the same order under the same assumption that $\alpha<1$. We also prove analogous two-sided pointwise estimates on the \emph{slightly subcritical} two-point function under the same hypotheses, interpolating between $\| x \|^{-d+\alpha}$ decay below the correlation length and $\| x \|^{-d-\alpha}$ decay above the correlation length. In dimensions $d=1,2,3$, we deduce that the triangle condition holds under the minimal assumption that $0<\alpha<d/3$. 
While this result had previously been established under additional perturbative assumptions using the lace expansion, our proof is completely non-perturbative and does not rely on the lace expansion in any way.
 In dimensions $1$ and~$2$ our results also treat the marginal case $\alpha=d/3$, implying that the triangle diagram diverges at most logarithmically  and hence that mean-field critical behaviour holds to within polylogarithmic factors.
\end{abstract}

\section{Introduction}
\label{sec:intro}

\textbf{Long-range percolation} on $\Z^d$ is defined to be the random graph with vertex set $\Z^d$ in which each potential edge $\{x,y\}$ is included independently at random with probability $1-\exp(-\beta J(x,y))$, where $J:\Z^d \times \Z^d \to [0,\infty)$ is a \textbf{symmetric kernel} (meaning that $J(x,y)=J(y,x)$ for every $x,y\in \Z^d$). We write $\P_\beta=\P_{\beta,J}$ for the law of the resulting random graph. We will be primarily interested in kernels that are \textbf{translation-invariant}, meaning that $J(x,y)=J(0,y-x)$ for every $x,y\in \Z^d$, and satisfy
\begin{equation}
J(x,y) \sim A \|x-y\|^{-d-\alpha} \qquad \text{ as $x-y\to\infty$}
\end{equation}
for some norm $\|\cdot\|$ on $\Z^d$ and some constants $A>0$ and $\alpha>0$; smaller values of $\alpha$ make longer edges more likely. Edges that are included in the random subgraph are referred to as \textbf{open}, and the connected components of the random subgraph are referred to as \textbf{clusters}.
 The model undergoes a phase transition
 in which an infinite cluster emerges as $\beta$ is varied through
  the \textbf{critical value}
\[
\beta_c=\beta_c(J):=\inf\{\beta \geq 0: \P_\beta(\text{an infinite cluster exists})>0 \},
\]
which is non-degenerate in the sense that $0<\beta_c<\infty$ if and only if $d\geq 2$ and $\alpha>0$ or $d=1$ and $0<\alpha \leq 1$ \cite{schulman1983long,newman1986one}. % 
In this paper we study long-range percolation at and near the critical point $\beta_c$, referring the reader to 
e.g.\ \cite{heydenreich2015progress,biskup2021arithmetic,ding2023uniqueness,baumler2023distances} and references therein for other aspects of the model such as the geometry of supercritical clusters. It is also known that the model has a \emph{continuous} phase transition (with no infinite clusters at $\beta_c$) when $\alpha<d$ \cite{MR1896880,hutchcroft2020power} and a discontinuous phase transition when $d=\alpha=1$ \cite{MR868738}. It is conjectured that the phase transition is always continuous when $d\geq 2$, but the large-$\alpha$ problem is understood only in high dimensions \cite{sakai2018crossover,MR4032873,MR3306002,MR2430773}. Indeed, the large-$\alpha$ problem  appears to be similarly difficult as for nearest-neighbour models, where the analogous problem is understood only in two dimensions \cite{kesten1980critical,duminil2016absence} and high dimensions \cite{MR1043524,fitzner2015nearest}.

Both nearest-neighbour and long-range percolation are expected to exhibit a rich, fractal-like geometry at and near criticality. This geometry is described in part by \textbf{critical exponents}, with some of the most important  defined (conjecturally) by
\[
\P_{\beta_c}(0\leftrightarrow x) \approx \|x\|^{-d+2-\eta},\quad \P_{\beta_c}(|K|\geq n) \approx n^{-1/\delta},\quad \P_{\beta_c+\eps}(0\leftrightarrow \infty)\approx \eps^\beta,\; \text{ and } \; \E_{\beta_c-\eps}|K| \approx \eps^{-\gamma}.
\]
Here $\{0\leftrightarrow x\}$ denotes the event that $0$ and $x$ are connected (i.e., belong to the same cluster), $K$ denotes the cluster of the origin, and $\approx$ means that the ratio of the logarithms of both sides converges to $1$ in the appropriate limit; the point-to-point connection probability $\P_{\beta}(x\leftrightarrow y)$ is known as the \textbf{two-point function}. See \cite[Chapters 9 and 10]{grimmett2010percolation} for an introduction to these exponents, including both the rigorous and non-rigorous theory.
 The \emph{universality principle} predicts that exponents should depend on  $d$ and  $\alpha$ but not on the microscopic details of the model such as the exact choice of kernel. We stress that even proving the \emph{existence} of these critical exponents is a challenging open problem in many cases:
For nearest-neighbour percolation on $\Z^d$, almost nothing is known outside of the two-dimensional \cite{MR879034,smirnov2001critical,smirnov2001critical2,lawler2002one,nienhuis1987coulomb} and high-dimensional \cite{MR2239599,MR1043524,fitzner2015nearest,MR762034,MR1127713,MR2551766,MR2748397} settings. We will focus on the high dimensional case, which is more relevant to our results.

When $d$ is sufficiently large or $\alpha$ is sufficiently small (making the model's ``effective dimension'' large), the model is expected to exhibit \emph{mean-field critical behaviour}, meaning in particular that $\eta=\max\{0,2-\alpha\}$, $\delta=2$, $\beta=1$, and $\gamma=1$. More precisely, mean-field critical behaviour is expected to hold whenever $d$ is strictly larger than the \textbf{upper critical dimension} $d_c=\min\{6,3\alpha\}$, with logarithmic corrections to mean-field scaling when $d=d_c$.
 Until recently, the only way to prove mean-field critical behaviour in finite dimension was via the \emph{lace expansion}, a method introduced by Brydges and Spencer \cite{MR782962} to study weakly self-avoiding walk and first applied to percolation in the landmark work of Hara and Slade \cite{MR1043524}.
    The lace expansion was applied to long-range percolation in the work of Heydenreich, van der Hofstad, and Sakai \cite{MR2430773} and Chen and Sakai \cite{MR3306002,MR4032873}, 
   who prove in particular that if $d>\min\{6,3\alpha\}$ and the kernel $J$ is invariant under permutations of coordinates and reflections in coordinate hyperplanes and satisfies a certain perturbative criterion (i.e., is sufficiently `spread-out') then
 \begin{equation}
\P_{\beta_c}(0 \leftrightarrow x) \asymp \begin{cases} 
\|x\|^{-d+\alpha} & \alpha<2\\
 \|x\|^{-d+2} \frac{1}{\log \|x\|} & \alpha = 2\\
 \|x\|^{-d+2} & \alpha > 2
\end{cases}
\qquad \text{ as $\|x\|\to\infty$.}
\end{equation}
In other words, the two-point function is of the same order as the Green's function for the random walk with jump kernel proportional to $\|x-y\|^{-d-\alpha}$, which converges to Brownian motion when $\alpha>2$ and to a symmetric $\alpha$-stable L\'evy process when $\alpha<2$.
Notably, this analysis also works in the marginal case $d=6$, $\alpha=2$, where the logarithmic correction to the two-point function (which is also present in the random walk Green's function when $\alpha=2$) causes the model to have exact mean-field scaling \cite{MR4032873}. The analogous ``$x$-space" two-point function bounds for high-dimensional nearest-neighbour percolation were first proven by Hara, van der Hofstad, and Slade \cite{MR1959796}, with the original approach to the lace expansion working with Fourier coefficients.
These two-point function estimates imply under the same assumptions that the \emph{triangle diagram}
 \[
\nabla_\beta :=\sum_{x,y\in \Z^d} \P_\beta(0\leftrightarrow x)\P_\beta(x\leftrightarrow y)\P_\beta(y\leftrightarrow 0)
\]
converges at $\beta_c$, which is a sufficient condition for various other exponents to take their mean-field values \cite{MR762034,MR1127713,HutchcroftTriangle}.
An interesting feature of Sak's prediction is that when $d<6$ there is a non-trivial regime $(d/3,\alpha_c)$ in which the model is not mean-field but the simple scaling of the two-point function $\P_{\beta_c}(0\leftrightarrow x)\approx \|x\|^{-d+\alpha}$ continues to hold.

As alluded to above, a major shortcoming of the lace expansion is that it is an inherently \emph{perturbative} method, meaning that it needs a ``small parameter'' to work and cannot be applied using only the minimal assumption that $d>d_c$. In practice, this usually means either taking the dimension to be very large or using a ``spread out'' kernel that is roughly constant over a ball of large radius. It remains a major open problem to prove mean-field behaviour for nearest-neighbour percolation on Euclidean lattices under the minimal assumption that $d>6$, with the current best results for the standard hypercubic lattice $\Z^d$ requiring $d \geq 11$ \cite{fitzner2015nearest}. Moreover, the lace expansion seems totally unsuited to analyzing percolation at the upper-critical dimension itself, where logarithmic corrections to mean-field scaling are expected \cite{essam1978percolation}.

The main result of this paper is a non-perturbative proof of mean-field critical behaviour for $\alpha<\min\{d/3,1\}$ and of mean-field critical behaviour up to polylogarithmic factors in the marginal case $\alpha=d/3$ when $d\in \{1,2\}$. 
% When $d\in \{1,2,3\}$, this establishes mean-field critical behaviour under the minimal assumption that $\alpha<d/3$. 
The proof does not rely on the lace expansion in any way. 

\begin{theorem}
\label{cor:triangle}
Let $d\geq 1$, let $J:\Z^d\times \Z^d \to [0,\infty)$ be a symmetric, integrable, translation-invariant kernel, and suppose that there exist constants $0<\alpha<d$ and $c>0$ such that $c \|x-y\|^{-d-\alpha} \leq J(x,y) \leq C\|x-y\|^{-d-\alpha}$ for all distinct $x,y \in \Z^d$.  \begin{enumerate}
\item 
If $\alpha<\min\{1,d/3\}$ then $\nabla_{\beta_c}<\infty$. As a consequence, the model has mean-field critical behaviour in the sense that
\[
\P_{\beta_c}(|K|\geq n) \asymp \frac{1}{\sqrt{n}}, \qquad \E_{\beta_c-\eps}|K| \asymp \frac{1}{\eps}, \quad \text{ and } \quad \P_{\beta_c+\eps}(|K|=\infty) \asymp \eps
\]
as $n\to\infty$ or $\eps \downarrow 0$ as appropriate.
\item If $d\in \{1,2\}$ and $\alpha = d/3$ then the triangle diagram diverges at most logarithmically as $\beta\uparrow \beta_c$ in the sense that $\nabla_{\beta_c-\eps}=O(\log 1/\eps)$ as $\eps \downarrow 0$. As a consequence, the model has mean-field critical behaviour to within polylogarithmic factors 
in the sense that
\[
\P_{\beta_c}(|K|\geq n) \asymp \frac{(\log n)^{O(1)}}{\sqrt{n}}, \quad \E_{\beta_c-\eps}|K| \asymp \frac{(\log \frac{1}{\eps})^{O(1)}}{\eps}, \; \text{ and } \; \P_{\beta_c+\eps}(|K|=\infty) \asymp  (\log \frac{1}{\eps})^{O(1)}\eps
\]
as $n\to\infty$ or $\eps \downarrow 0$ as appropriate. Moreover, the exponent of the logarithm is bounded above by $2$ in each case.
\end{enumerate}
\end{theorem}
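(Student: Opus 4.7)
The plan has two stages: a convolution calculation that converts the pointwise two-point function estimates already proved in the paper into bounds on $\nabla_\beta$, and a classical triangle-condition argument that extracts mean-field critical exponents from those bounds.

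For part 1, the assumption $\alpha<\min\{1,d/3\}$ places us squarely in the regime where the paper's pointwise upper bound applies, giving $\P_{\beta_c}(0\leftrightarrow x)\preceq \|x\|^{-d+\alpha}$ for every $x\neq 0$. Substituting this into the definition of $\nabla_{\beta_c}$ and applying the standard Riesz-type convolution estimate
\[
\sum_{x\neq 0,y}\|x\|^{-d+\alpha}\|x-y\|^{-d+\alpha}\asymp \|y\|^{-d+2\alpha},
\]
which is valid since $2\alpha<d$, reduces the triangle sum to $\sum_{y\neq 0}\|y\|^{-2d+3\alpha}$. This converges precisely under the hypothesis $3\alpha<d$.

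For part 2, where $d\in\{1,2\}$ and $\alpha=d/3$, I instead apply the paper's pointwise upper bound on the slightly subcritical two-point function, which decays like $\|x\|^{-d+\alpha}$ below the correlation length $\xi(\eps)$ and like $\xi(\eps)^{2\alpha}\|x\|^{-d-\alpha}$ above it. The inner convolution estimate is unchanged on scales $\|y\|\le \xi(\eps)$, yielding $(P_\eps*P_\eps)(y)\asymp \|y\|^{-d+2\alpha}$. After multiplying by the third factor $P_\eps(y)\preceq \|y\|^{-d+\alpha}$ and summing, the triangle becomes essentially $\sum_{1\leq\|y\|\leq \xi(\eps)}\|y\|^{-d}\asymp \log\xi(\eps)$, with the contribution from $\|y\|>\xi(\eps)$ being $O(1)$ thanks to the faster subcritical tail decay. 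Since $\xi(\eps)$ grows polynomially in $1/\eps$ (as is itself a consequence of the same subcritical pointwise bounds), this yields $\nabla_{\beta_c-\eps}\preceq \log(1/\eps)$.

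The mean-field conclusions then follow from classical consequences of the triangle condition: Aizenman--Newman's differential inequality gives $\E_{\beta_c-\eps}|K|\asymp 1/\eps$, while the Barsky--Aizenman differential inequality together with Aizenman--Barsky yield $\P_{\beta_c}(|K|\geq n)\asymp n^{-1/2}$ and $\P_{\beta_c+\eps}(|K|=\infty)\asymp \eps$. In the marginal case, one applies the quantitative versions of these inequalities (including the author's own quantitative triangle-condition machinery cited in the excerpt) with $\nabla_{\beta_c-\eps}\preceq \log(1/\eps)$ playing the role of a slowly diverging triangle; since each differential inequality introduces at most a single power of $\log(1/\eps)$ and one power is lost in a finite-size scaling step connecting the slightly subcritical and critical regimes, one obtains the claimed polylog corrections with exponent bounded by $2$. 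The substantive content of the theorem lives upstream in the pointwise two-point function estimates of the paper; the present deduction is convolution bookkeeping plus standard triangle-condition machinery, and the only genuinely delicate step is the careful tracking of the logarithm through the subcritical differential inequalities in the marginal case.
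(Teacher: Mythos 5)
Your proposal is correct and follows essentially the same route as the paper: both insert the pointwise two-point function bounds into the defining convolution (using a Riesz-type estimate on the inner sum), obtain $\nabla_{\beta_c}<\infty$ when $\alpha<d/3$ and $\nabla_{\beta_c-\eps}\preceq\log\xi(\beta_c-\eps)$ when $\alpha=d/3$, and then invoke the quantitative triangle-condition results of \cite{HutchcroftTriangle} to extract the mean-field and polylog-corrected-mean-field exponent statements. The only cosmetic divergence is that for part 1 you work directly at $\beta_c$ with \cref{thm:main_pointwise}, while the paper derives a bound $\nabla_\beta\preceq 1$ uniformly over subcritical $\beta$ from \cref{thm:subcritical}; both are fine.

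One small inaccuracy worth correcting: you assert that the polynomial growth of $\xi(\eps)$ in $1/\eps$ is ``itself a consequence of the same subcritical pointwise bounds,'' but that is not so. Those bounds give the scaling relation $\chi(\beta)\asymp\xi(\beta)^\alpha$, which relates the two quantities to each other but says nothing about their rate of divergence as $\beta\uparrow\beta_c$. The paper instead imports the bound $\chi(\beta)=|\beta-\beta_c|^{-O(1)}$ from \cite{hutchcroft2020power,1901.10363} and then converts $\log\xi(\beta)$ to $\log(1/|\beta-\beta_c|)$ via $\chi\asymp\xi^\alpha$. This is a genuine external input (ultimately resting on sharpness of the phase transition and hyperscaling-type inequalities), not a byproduct of the convolution estimates you have available. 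The rest of your account of how \cite{HutchcroftTriangle} is applied is accurate in spirit, though the ``one power lost in a finite-size scaling step'' heuristic is your own gloss rather than something the paper spells out.
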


The fact that logarithmic divergence of the triangle diagram implies that mean-field critical behaviour holds to within polylogarithmic factors was proven in \cite{HutchcroftTriangle}. The notational conventions used in the statement of this theorem, and throughout the rest of the paper, are as follows:
We write $\asymp$, $\preceq$, and $\succeq$ to denote equalities and inequalities that holds to within positive multiplicative constants depending only on the parameters $d$, $\alpha$, $C$, and $c$ appearing in the statement of our main theorems. (Sometimes the constants will depend only on some subset of these parameters as appropriate if not all appear in the statement being proven.) Big-$O$ notation will be used similarly, so that all implicit constants depend only on $d$, $\alpha$, $C$, and $c$. Throughout the rest of the paper, we will let $\|\cdot\|=\|\cdot\|_\infty$ denote the $\ell^\infty$ norm on $\Z^d$. (Note that all our main theorems are insensitive to the choice of norm on $\Z^d$; the $\ell^\infty$ norm is merely the most notationally convenient for our proofs.) To avoid dividing by zero, we use the notation $\langle x-y\rangle=\max\{2,\| x-y\|\}$.

\begin{remark}
It is an amusing feature of our methods that they are able to prove mean-field behaviour of long-range percolation in \emph{low dimensions} $d\in \{1,2,3\}$ under the optimal assumption that $\alpha<d/3$, but give only partial results in higher dimensions. This is in stark contrast to the lace expansion, which typically works more efficiently as the dimension gets higher.
\end{remark}

The proof of \cref{cor:triangle} is closely related to analogous developments for long-range percolation on the \emph{hierarchical} lattice, where we now have a fairly detailed understanding of critical behaviour in all three regimes $d>d_c$, $d=d_c$, and $d<d_c$ \cite{hutchcrofthierarchical,hutchcroft2022critical}. We conjecture that long-range percolation with $d<6$ and $\alpha=d/3$ has the same logarithmic corrections to scaling as the hierarchical model, some of which were computed exactly in \cite{hutchcroft2022critical}, and which are \emph{not} the same corrections predicted to hold for the nearest-neighbour model in six dimensions \cite{essam1978percolation}. Here is one concrete instance of this conjecture:

\begin{conjecture}
Let $d\geq 1$, let $J:\Z^d\times \Z^d \to [0,\infty)$ be a symmetric, integrable, translation-invariant kernel, and suppose that there exist constants $0<\alpha<d$ and $c>0$ such that $c \|x-y\|^{-d-\alpha} \leq J(x,y) \leq C\|x-y\|^{-d-\alpha}$ for all distinct $x,y \in \Z^d$. If $d<6$ and $\alpha=d/3$ then the logarithmic corrections to scaling are the same as for the hierarchical lattice. In particular,
\[
\P_{\beta_c}(|K|\geq n) \asymp \frac{(\log n)^{1/4}}{\sqrt{n}}
\]
as $n\to \infty$.
\end{conjecture}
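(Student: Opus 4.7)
The plan is to bootstrap from the pointwise two-point function estimates of the present paper to sharp asymptotics of the form $\P_{\beta_c}(0\leftrightarrow x) \asymp \langle x\rangle^{-d+\alpha}(\log \langle x\rangle)^{-\eta^*}$ with an explicit exponent $\eta^*$, and then extract the $(\log n)^{1/4}$ correction for the cluster volume tail via the standard differential-inequality machinery. The candidate value of $\eta^*$ should be computed by running a (formal) perturbative renormalization group at $\alpha=d/3$ for the two-point function and matching the output to the rigorous computation of \cite{hutchcroft2022critical} on the hierarchical lattice. In that setting the analogous exponent was derived from an ODE for the coupling-constant flow whose coefficient, although depending on $d$ and $\alpha$, is universal across the microscopic details of the kernel; the same flow equation should govern the Euclidean model provided the inter-scale structure of clusters is asymptotically self-similar.

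Next I would upgrade the upper bound $\nabla_{\beta_c-\eps}=O(\log 1/\eps)$ from \cref{cor:triangle}(2) to a two-sided estimate $\nabla_{\beta_c-\eps}\asymp (\log 1/\eps)^\kappa$ with the exact RG-predicted power $\kappa$. Combined with the Aizenman--Newman-type differential inequalities and the author's previous work relating triangle divergence to polylogarithmic correction exponents, this should yield the correct corrections for $\E_{\beta_c-\eps}|K|$ and $\P_{\beta_c+\eps}(|K|=\infty)$, and thence, via the standard duality between susceptibility at $\beta_c-\eps$ and cluster volume tails at $\beta_c$, the $(\log n)^{1/4}$ asymptotic. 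The specific fraction $1/4$ would arise from arithmetic involving the scaling dimension of the coupling $\beta-\beta_c$ at the Gaussian fixed point together with the one-loop beta function, exactly as on the hierarchical lattice.

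The principal obstacle is the absence of exact scale invariance on $\Z^d$. In the hierarchical model the ultrametric yields a clean multiplicative decomposition of clusters across scales and hence a closed recursion for the rescaled two-point function, and this recursion is what pins down the exact logarithmic exponent. On $\Z^d$ only approximate scale invariance is available, so the recursion must be replaced by a discrete renormalization step whose errors at each scale must be controlled and shown not to accumulate multiplicatively. Concretely, one needs sharp regularity estimates of the form $\P_{\beta_c}(0\leftrightarrow 2x)/\P_{\beta_c}(0\leftrightarrow x) \to 2^{-d+\alpha}$ with a quantitative rate, which goes substantially beyond the pointwise bounds of this paper. A reasonable intermediate target, which may be within reach of the existing non-perturbative machinery augmented by a coarse-graining argument, would be to first establish $\P_{\beta_c}(|K|\geq n)\asymp (\log n)^\eta/\sqrt{n}$ for \emph{some} explicit $\eta$, postponing the identification of $\eta=1/4$ for a later renormalization-group analysis that leverages the universality of the one-loop flow.
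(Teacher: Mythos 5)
This statement is a \emph{conjecture} in the paper, not a theorem: the paper offers no proof of it, and none is currently known. Your submission accordingly is not a proof either, and to your credit you do not pretend otherwise --- you explicitly flag the two key gaps (quantitative approximate scale invariance of the two-point function on $\Z^d$, and the identification of the exact logarithmic exponent via a rigorous RG flow) as open. So there is no ``paper's proof'' to compare against; what you have written is a plausible research program, not an argument.

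A few substantive remarks on the program itself. First, the heuristic you describe --- compute $\eta^*$ by matching to the hierarchical computation of \cite{hutchcroft2022critical}, then feed sharp two-sided triangle asymptotics $\nabla_{\beta_c-\eps}\asymp(\log 1/\eps)^\kappa$ into the differential-inequality machinery of \cite{HutchcroftTriangle} --- is the natural route, and it is consistent with the discussion surrounding the conjecture in the paper. However, you should be careful about the logic in the second paragraph: \cite{HutchcroftTriangle} upgrades a \emph{bound} on the triangle to a bound on the volume-tail exponent with some unspecified polylog power, but extracting the \emph{exact} power $1/4$ requires matching upper and lower bounds on the triangle with the correct power of the logarithm, plus a matching second-moment or OZE-type lower bound on $\P_{\beta_c}(|K|\ge n)$; neither direction is a formal consequence of the existing machinery. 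Second, the ``standard duality between susceptibility at $\beta_c-\eps$ and cluster volume tails at $\beta_c$'' is not a theorem in this generality --- it is a scaling heuristic, and making it rigorous at the marginal dimension (where the corrections are precisely what one is trying to compute) is itself a substantial open problem. Third, the intermediate target you propose (establish $\P_{\beta_c}(|K|\ge n)\asymp(\log n)^\eta/\sqrt n$ for \emph{some} $\eta$) is already well beyond what the paper proves; Theorem \ref{cor:triangle}(2) only gives matching bounds up to unspecified polylogarithmic factors, not a sharp power of the logarithm. In short: your outline correctly identifies the landscape and the obstacles, but none of the steps is close to a proof, which is exactly why the statement is labelled a conjecture.
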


\medskip

\noindent \textbf{Sak's prediction and pointwise estimates on the critical two-point function.} 
The proof of \cref{cor:triangle} builds on a recent series of works studying long-range percolation \emph{beyond} the mean-field regime \cite{hutchcroft2020power,hutchcroft2022sharp,baumler2022isoperimetric}, and in fact \cref{cor:triangle} will be a simple corollary of more general theorems establishing pointwise estimates on critical and near-critical two-point functions for long-range percolation with $\alpha<1$. Before stating these theorems, let us explain the relevant context.

% \medskip

In contrast to the infamously mysterious nature of other aspects of intermediate-dimensional critical phenomena,
 there is a surprisingly simple prediction for the dependence of the two-point exponent $\eta$ on the long-range parameter $\alpha$ when the dimension $d$ is fixed:
\begin{equation}
\label{eq:eta_prediction}
2-\eta = \begin{cases} \alpha & \alpha \leq \alpha_c\\
2-\eta_\mathrm{SR} & \alpha>\alpha_c,
\end{cases}
\end{equation}
where $\eta_\mathrm{SR}$ is the value of the exponent $\eta$ for the nearest-neighbour model in $d$ dimensions and the \textbf{crossover value} $\alpha_c=2-\eta_\mathrm{SR}$ is the unique value making this function continuous. For $d=1$, the analogous conjecture is that the equality $2-\eta=\alpha$ for every $\alpha\in (0,1)$.
This prediction is essentially due to Sak \cite{sak1973recursion}, who studied the Ising model rather than percolation; see also e.g.\ \cite{brezin2014crossover,behan2017scaling,luijten1997interaction,gori2017one,behan2017scaling} for further discussion in the physics literature (where there has been some controversy over whether Sak's prediction is correct \cite{van1982instability,picco2012critical,brezin2014crossover}) and \cite{MR3772040,MR3723429} for rigorous partial progress on the spin $O(n)$ model.

Significant progress on Sak's prediction for long-range percolation beyond the mean-field regime has been made in the recent works \cite{hutchcroft2022sharp,baumler2022isoperimetric}. First, in \cite{hutchcroft2022sharp}, we proved that if $J:\Z^d\times \Z^d\to [0,\infty)$ is a symmetric, translation invariant kernel and there exist $0<\alpha<d$ and a constant $c$ such that $J(x,y)\geq c\|x-y\|^{-d-\alpha}$ for every distinct $x,y\in \Z^d$ then 
\begin{equation}
\label{eq:spatially_averaged}
\frac{1}{r^d}\sum_{x\in [-r,r]^d} \P_{\beta_c}(0\leftrightarrow x) \preceq r^{-d+\alpha}
\end{equation}
for every $r\geq 1$. In particular, the critical exponent $\eta$ satisfies $2-\eta \leq \alpha$ whenever it is well-defined.
Following this work, B\"aumler and Berger \cite{baumler2022isoperimetric} proved that a matching lower bound holds when $J(x,y)\leq C\|x-y\|^{-d-\alpha}$ and $\alpha<1$. In one dimension this covers the entire range of relevant values, proving Sak's prediction that $2-\eta=\alpha$ for all $0<\alpha<1$ when $d=1$. One can think of the results of \cite{baumler2022isoperimetric} as showing that long-range, ``bulk-to-bulk'' effects always dominate short-range, ``boundary-to-boundary'' effects when $\alpha<1$, which is related to the inequality $\alpha_c\geq 1$. (Conditional on the validity of Sak's prediction, this is equivalent to the bound $\eta_\mathrm{SR}\leq 1$, which is known to hold in every dimension \cite[Lemma 3.1]{MR2748397}. In fact $\alpha_c$ is equal to $2$ in high dimensions and has a numerical value close to $2$ in every dimension $d>1$.)  Both works \cite{hutchcroft2022sharp,baumler2022isoperimetric} followed aforementioned earlier work for long-range percolation on the \emph{hierarchical lattice} \cite{hutchcrofthierarchical}, where we still have a much more refined understanding of critical behaviour than for long-range percolation on $\Z^d$ \cite{hutchcroft2022critical}.

The most fundamental result of this paper is an improvement of the spatially-averaged estimates of \cite{hutchcroft2022sharp,baumler2022isoperimetric} to \emph{pointwise} two-sided estimates under the assumption that $\alpha<1$.

\begin{theorem}[The critical two-point function]
\label{thm:main_pointwise}
Let $d\geq 1$, let $J:\Z^d\times \Z^d \to [0,\infty)$ be a symmetric, integrable, translation-invariant kernel, and suppose that there exist constants $0<\alpha<d$ and $c>0$ such that $c \|x-y\|^{-d-\alpha} \leq J(x,y) \leq C\|x-y\|^{-d-\alpha}$ for all distinct $x,y \in \Z^d$. If $\alpha<1$ then 
% there exist positive constants $a=a(d,\alpha,c,C)$ and $A=A(d,\alpha,c,C)$ such that
\[
% a \|x\|^{-d+\alpha} \leq 
\P_{\beta_c}(0\leftrightarrow x) \asymp  \|x\|^{-d+\alpha}
\] 
for every $x\in \mathbb{Z}^d \setminus \{0\}$. 
\end{theorem}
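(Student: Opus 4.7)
The plan is to upgrade the spatially averaged bounds of \cite{hutchcroft2022sharp} and \cite{baumler2022isoperimetric} to pointwise estimates via a ``Harnack-type'' regularity statement for the two-point function:
\begin{equation*}
\P_{\beta_c}(0\leftrightarrow x) \asymp \P_{\beta_c}(0\leftrightarrow y) \quad \text{whenever $\|y-x\|\leq \|x\|/C$,}
\end{equation*}
for some large absolute constant $C$. Once this regularity is established the theorem follows by a chaining argument: the averaged bound $\sum_{y\in B_s(0)}\P_{\beta_c}(0\leftrightarrow y)\asymp s^\alpha$ (upper bound from \cite{hutchcroft2022sharp}; matching lower bound obtained by combining the averaged lower bound of \cite{baumler2022isoperimetric} with the upper bound at a smaller scale to localise to the shell $\|y\|\asymp s$) forces the average connection probability on this shell to be of order $s^{-d+\alpha}$, and chaining the regularity estimate along a bounded-length sequence of overlapping boxes propagates this value to any specific point $x$ of the shell.

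For the $\succeq$ direction of the regularity I would use a ``bulk-to-bulk'' connection argument that is enabled crucially by the assumption $\alpha<1$. By FKG one has $\P_{\beta_c}(0\leftrightarrow x) \geq \P_{\beta_c}(0\leftrightarrow y)\P_{\beta_c}(x\leftrightarrow y)$, so it is enough to show that $\P_{\beta_c}(x\leftrightarrow y)\succeq 1$ uniformly in $y$ with $\|y-x\|\leq \|x\|/C$. One picks an intermediate scale $r_0$ so that the expected number of open edges between $B_{r_0}(x)$ and $B_{r_0}(y)$ is of order one (possible precisely because $\alpha<1$: the condition $r_0^{2d}\|x-y\|^{-d-\alpha}\succeq 1$ is consistent with $r_0\ll \|x-y\|$ under this hypothesis), and combines this with the averaged B\"aumler--Berger lower bound on the sizes of the clusters of $x$ and $y$ inside $B_{r_0}(x)$ and $B_{r_0}(y)$. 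A short FKG plus second-moment argument then produces the required uniform positive lower bound on $\P_{\beta_c}(x\leftrightarrow y)$.

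The $\preceq$ direction is the main obstacle and requires substantially new work, since naive applications of FKG only give the much weaker bound $\E_{\beta_c}[|K(0)\cap B_{\|x\|/C}(x)|\mid 0\leftrightarrow x]\succeq \|x\|^\alpha$. The strategy I would pursue is to \emph{invert} the bulk-to-bulk argument: on the event $\{0\leftrightarrow x\}$, extract a distinguished ``last long edge'' $(u,v)$ in a path from $0$ to $x$ with $v\in B_{\|x\|/C}(x)$, and then use the bulk-to-bulk machinery inside $B_{\|x\|/C}(x)$ to show that, with good probability, $v$ --- and hence $0$ --- is connected to a positive fraction of the points $y\in B_{\|x\|/C}(x)$, thereby producing a positive fraction of such $y$ with $\P_{\beta_c}(0\leftrightarrow y)\succeq \P_{\beta_c}(0\leftrightarrow x)$. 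I expect the hardest step to be the rigorous implementation of this ``random-endpoint resampling''; it will likely require a delicate BK-type path decomposition combined with surgery arguments specific to long-range percolation with $\alpha<1$, and repeated reuse of the averaged estimates at both the scale $\|x\|$ and the intermediate scale $r_0$.
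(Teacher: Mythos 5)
Your proposed reduction to a two-sided Harnack inequality $\P_{\beta_c}(0\leftrightarrow x) \asymp \P_{\beta_c}(0\leftrightarrow y)$ for $\|y-x\|\leq \|x\|/C$ would indeed suffice (given the averaged estimates), but your proof of that inequality has two genuine gaps, one in each direction, and the paper's actual proof proceeds quite differently.

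For the $\succeq$ direction, the claim that $\P_{\beta_c}(x\leftrightarrow y)\succeq 1$ uniformly over $\|y-x\|\leq \|x\|/C$ is simply false. At criticality there is no infinite cluster, and the averaged estimate of \cite{hutchcroft2022sharp} already shows that the average of $\P_{\beta_c}(x\leftrightarrow z)$ over $z$ in a ball of radius $r$ around $x$ is $\preceq r^{-d+\alpha}\to 0$; since $\|x-y\|$ ranges up to $\|x\|/C$, for most such $y$ the two-point function is small, not bounded below. Your dimensional analysis for the intermediate scale $r_0$ also doesn't close: for the long edge to connect the \emph{clusters} of $x$ and $y$ (not merely the balls), the relevant count is $\asymp r_0^{2\alpha}\|x-y\|^{-d-\alpha}$, which is $\succeq 1$ only for $r_0 \gg \|x-y\|$ since $\alpha<d$. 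If one instead takes $r_0 \asymp \|x-y\|$ and runs a second-moment argument, what comes out is $\P_{\beta_c}(x\leftrightarrow y)\succeq \|x-y\|^{-d+\alpha}$. That is precisely the content of the lower bound of the theorem and is essentially what the paper proves directly (Proposition \ref{prop:main_pointwise_lower}, via a count $Z$ of disjoint ``bulk--long edge--bulk'' connections, with the second moment controlled by the universal tightness results of \cite{hutchcroft2020power}). Plugging this corrected bound into your FKG step, however, only yields the useless $\P_{\beta_c}(0\leftrightarrow x)\succeq \P_{\beta_c}(0\leftrightarrow y)\|x\|^{-d+\alpha}$, so it does not establish the Harnack lower bound you need, and the Harnack route collapses.

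For the $\preceq$ direction (equivalently, the pointwise upper bound) you explicitly leave a placeholder: ``random-endpoint resampling'' via a ``delicate BK-type path decomposition combined with surgery arguments'' is not an argument, and you acknowledge it is the hard part. The paper's approach here is genuinely different and is the key new idea: a \emph{random box exit decomposition} (\cref{lem:random_box_convolution}) --- averaging the location of the box through which an open path first exits --- combined with an \emph{induction on scales}. Defining $A_k$ as the best pointwise constant on $\Lambda_k$, the exit decomposition plus the averaged bound \eqref{eq:spatially_averaged} yields a contractive recursion of the form $A_{k+1}\leq \max\{A_k, C_0 + \tfrac12 A_k\}$ (after splitting the convolution sum at an intermediate scale $\eps 2^k$ and using $\alpha<1$ to control the near-diagonal contribution). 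This bootstrap is what upgrades the averaged bound to a pointwise one; nothing in your proposal anticipates it. In short, your lower-bound intuition is pointing at the right mechanism but is executed incorrectly as stated, and the upper bound --- the genuinely hard half --- is missing.
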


This \emph{upper bound} of this theorem is proven via an induction on scales, where we use the \emph{random box exit decomposition} method of \cite{baumler2022isoperimetric} to improve the spatially-averaged upper bounds of \cite{hutchcroft2022sharp} to pointwise estimates. It is interesting to note that this method was originally developed for proving \emph{lower bounds} of the form $\|x\|^{-d+\alpha}$ on the two-point function, which should only hold for $\alpha<\alpha_c$, while the pointwise upper bound of \cref{thm:main_pointwise} is presumably true for all $\alpha>0$; proving this remains a significant open problem. The \emph{lower bound} of \cref{thm:main_pointwise} is proven via a second moment analysis of the number of long edges belonging to a certain kind of open path from $0$ to $x$, where we use the results of \cite{baumler2022isoperimetric} to lower bound the first moment and the results of \cite{hutchcroft2020power} to upper bound the second moment.
% Roughly speaking, this method shows that if $0$ is connected to $x$ then this 
% 

\medskip

\noindent 
\textbf{The slightly subcritical two-point function}. In order to prove logarithmic divergence of the triangle diagram at the critical dimension $d=d_c$, we will also need control of the two-point function in the near-critical regime, when $\beta=\beta_c-\eps$. While fairly weak estimates would suffice for this application, we will in fact be able to prove sharp bounds that are of independent interest.

Let us first recall what is known for the subcritical two-point function when $\beta<\beta_c$ is fixed and $x\to \infty$.
Under mild regularity assumptions on the kernel $J$ (which hold when, say, $J(x,y)\sim C\llangle x-y\rrangle^{-d-\alpha}$ for some norm $\llangle \cdot \rrangle$ on $\Z^d$ and some $\alpha>0$), it is known that
\[
\P_\beta(x\leftrightarrow y) \sim \beta \chi(\beta)^2 J(x,y) \qquad \text{as $x-y\to \infty$ for every fixed $0<\beta < \beta_c$,}
\]
where $\chi(\beta) := \E_\beta|K|= \sum_{x\in \Z^d} \P_\beta(0\leftrightarrow x)$ is the \emph{susceptibility}, which is finite if and only if $\beta<\beta_c$ by the sharpness of the phase transition \cite{aizenman1987sharpness,duminil2015new,1901.10363}. As explained in detail in \cite{MR4248721}, one can interpret this theorem as stating that the most efficient way to connect $0$ to $x$ is for a single long edge to join ``typical small clusters'' around $0$ and $x$, with each such cluster having expected size $\chi(\beta)$. 

Our next theorem establishes sharp upper bounds on the slightly subcritical two-point function that interpolate between the critical decay $\|x-y\|^{-d+\alpha}$ and the subcritical decay $\chi(\beta)^2\|x-y\|^{-d-\alpha}$ in the near-critical regime under the assumption that $\alpha<1$. The transition between these two regimes occurs at the
\textbf{correlation length} $\xi(\beta)$, which is defined formally for $0\leq \beta \leq \beta_c$ by
\[
\xi(\beta):= \inf\left\{r \geq 1: \sum_{x\in [-r,r]^d} \P_\beta(0\leftrightarrow x) \geq \frac{1}{2}\chi(\beta) \right\},
\]
so that $\xi(\beta_c)=\infty$. 

\begin{theorem}[The slightly subcritical two-point function]
\label{thm:subcritical}
Let $d\geq 1$, let $J:\Z^d\times \Z^d \to [0,\infty)$ be a symmetric, integrable, translation-invariant kernel, and suppose that there exist constants $0<\alpha<d$ and $c>0$ such that $c \|x-y\|^{-d-\alpha} \leq J(x,y) \leq C\|x-y\|^{-d-\alpha}$ for all distinct $x,y \in \Z^d$. If $\alpha<1$ then $\chi(\beta)\asymp \xi(\beta)^\alpha$ for every $0\leq \beta \leq \beta_c$ and 
% there exist constants $A=A(d,\alpha,c,C)$ and $A'=A'(d,\alpha,c,C)$ such that
\[
\P_{\beta}(0\leftrightarrow x) \asymp \Biggl\{\begin{matrix} \|x\|^{-d+\alpha} 
& \text{ if }\|x\|\leq \xi(\beta) \\
\chi(\beta)^2 \|x\|^{-d-\alpha} & \text{ if }\|x\|> \xi(\beta) 
\end{matrix}\Biggr\}
\asymp \|x\|^{-d+\alpha} \left(1 \vee \frac{\|x\|}{\xi(\beta)}\right)^{-2\alpha}
\] 
for every $\beta_c/2\leq \beta \leq \beta_c$ and $x\in \mathbb{Z}^d \setminus\{0\}$. 
\end{theorem}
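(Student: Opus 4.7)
The plan is to work outward from the correlation length scale: first prove $\P_\beta(0\leftrightarrow x)\asymp \|x\|^{-d+\alpha}$ throughout $\|x\|\le \xi(\beta)$ (along with $\chi(\beta)\asymp \xi(\beta)^\alpha$), and then bootstrap to the regime $\|x\|>\xi(\beta)$ using a single-long-edge decomposition.

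For the regime $\|x\|\le \xi(\beta)$, the pointwise upper bound is immediate from the monotonicity $\P_\beta\le \P_{\beta_c}$ and \cref{thm:main_pointwise}. Summing this bound and invoking the definition of $\xi(\beta)$ yields
\[
\chi(\beta) \;\le\; 2\sum_{x\in[-\xi(\beta),\xi(\beta)]^d}\P_\beta(0\leftrightarrow x) \;\preceq\; \sum_{x\in[-\xi(\beta),\xi(\beta)]^d}\|x\|^{-d+\alpha} \;\asymp\; \xi(\beta)^\alpha.
\]
For the matching pointwise lower bound I would reprise the random-box-exit / second-moment argument used to prove the lower bound in \cref{thm:main_pointwise}: that argument only invokes connection probabilities at scales at most $\|x\|$, which below the correlation length remain comparable to their critical counterparts (thanks to the bound $\chi(\beta)\preceq \xi(\beta)^\alpha$ just established). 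Summing the resulting in-box lower bound yields $\chi(\beta)\succeq \xi(\beta)^\alpha$ as a byproduct, closing the equivalence $\chi\asymp \xi^\alpha$.

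For the regime $\|x\|>\xi(\beta)$, the upper bound would come from a Simon--Lieb / BK-type inequality that decomposes an open path from $0$ to $x$ along a long edge, producing
\[
\P_\beta(0\leftrightarrow x) \;\preceq\; \sum_{u,v\in \Z^d}\P_\beta(0\leftrightarrow u)\,\beta J(u,v)\,\P_\beta(v\leftrightarrow x),
\]
with the sum restricted to an appropriate class of edges. Splitting the sum by the positions of $u$ and $v$, the dominant contribution comes from $u\in B(0,\|x\|/3)$ and $v\in B(x,\|x\|/3)$, where $J(u,v)\asymp \|x\|^{-d-\alpha}$ and each marginal sum is $\preceq \chi(\beta)$, giving the desired $\chi(\beta)^2\|x\|^{-d-\alpha}$. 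For the matching lower bound I would set up an edge-disjoint construction: for $u\in B(0,\xi/3)$ and $v\in B(x,\xi/3)$, the three events that $0$ is connected to $u$ inside $B(0,\xi/3)$, that the edge $uv$ is open, and that $v$ is connected to $x$ inside $B(x,\xi/3)$ use disjoint edge sets and are hence independent. A Paley--Zygmund argument applied to the number $N$ of valid pairs $(u,v)$ for which all three events occur then produces the lower bound, with first moment of order $\chi(\beta)^2\|x\|^{-d-\alpha}$ (using the in-box version of the below-$\xi$ lower bound) and second moment controlled via the already-established upper bounds.

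The main obstacle will be the Simon--Lieb-type decomposition in the upper bound for $\|x\|>\xi(\beta)$: in the long-range setting the long edge may be placed almost anywhere, so the decomposition must be arranged so that BK is applicable while the restricted sum still captures the full two-point function without losing factors. A related subtlety is the logical organisation of the bootstrap, since $\chi\preceq\xi^\alpha$ feeds into the pointwise lower bound below $\xi$, which in turn feeds into $\chi\succeq \xi^\alpha$ and the construction above $\xi$; I would arrange the whole argument either as an induction on scales or as a single simultaneous bootstrap combining all the ingredients.
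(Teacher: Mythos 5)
Your overall architecture matches the paper's: pointwise two-sided bounds in both regimes, $\chi\asymp\xi^\alpha$, a Paley--Zygmund/second-moment lower bound using boxes of size $\asymp\min\{\|x\|,\xi\}$ at both endpoints, and a long-edge decomposition for the upper bound beyond $\xi$. But two of the load-bearing steps are not actually supplied.

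The first and most serious gap is exactly the one you flag as an ``obstacle'' and then leave open: the upper bound for $\|x\|>\xi(\beta)$. A single Simon--Lieb/BK decomposition $\P_\beta(0\leftrightarrow x)\preceq\sum_{u,v}\P_\beta(0\leftrightarrow u)\beta J(u,v)\P_\beta(v\leftrightarrow x)$ across a box around the origin does not close, because the crossing edge $\{u,v\}$ can be short (both endpoints near the boundary of the box), in which case $J(u,v)$ is $O(1)$ and you are left needing a near-optimal upper bound on $\P_\beta(v\leftrightarrow x)$ at a scale only a constant factor smaller than $\|x\|$ --- precisely what you are trying to prove. Restricting $u\in B(0,\|x\|/3)$ and $v\in B(x,\|x\|/3)$ captures only part of the sum. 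The paper's solution is genuinely different from a single Simon--Lieb inequality: one first splits on whether the path from $0$ to $x$ uses some edge of length $\ge\|x\|/16$ (the event $\mathscr{L}_x$, whose probability is bounded by $\chi(\beta)^2\langle x\rangle^{-d-\alpha}$ directly via BK); on the complement, one performs \emph{two} random-box-exit decompositions, one near $0$ and one near $x$, averaging over translations $z$ and $w$ of both boxes. That averaging introduces suppressing factors $\min\{1,\|u-a\|/(r\xi)\}$ and $\min\{1,\|v-b\|/(r\xi)\}$ which, via \cref{lem:threshold_convolution}, yield a gain of $r^{-2\alpha}$; the remaining middle cluster term $\P(a\leftrightarrow b)$ is controlled by a bootstrap constant $A(\beta,n)$ defined on the finite-range truncation $\P_{\beta,n}$, and taking the radius parameter $r$ large closes the bootstrap $A\le C_1+A/2$. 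None of the randomized averaging, the $\mathscr{L}_x$ split, or the truncation-plus-bootstrap device appears in your outline, and without them the ``appropriate restriction'' you allude to does not exist.

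The second gap concerns the lower bound below $\xi(\beta)$ and hence $\chi(\beta)\succeq\xi(\beta)^\alpha$. You assert that the second-moment argument carries over ``thanks to the bound $\chi(\beta)\preceq\xi(\beta)^\alpha$ just established,'' but that bound does not give you the input the first moment requires, namely $\sum_{\|y\|\le n}\P_\beta(0\leftrightarrow y\text{ in }[-n,n]^d)\succeq n^\alpha$ for $n\lesssim\xi(\beta)$. The definition of $\xi$ only gives the \emph{upper} bound $\sum_{\|y\|\le n}\P_\beta(0\leftrightarrow y)\le\frac12\chi(\beta)$ when $n<\xi(\beta)$. The paper's first-moment lower bound invokes \cref{lem:correlation_length} (i.e.\ \cite[Lemma 2.5]{baumler2022isoperimetric} combined with the sharp-threshold $\phi_\beta(S)$ argument), which is an independent external ingredient and is in fact the mechanism by which both $\chi\asymp\xi^\alpha$ and the restricted-sum lower bound are obtained; presenting it as a consequence of $\chi\preceq\xi^\alpha$ reverses the logical dependence and leaves a circularity (since your derivation of $\chi\succeq\xi^\alpha$ in turn relies on the pointwise lower bound). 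A smaller point: the second-moment control requires a bound of the form $\E_\beta|K^1|^2\preceq n^\alpha\cdot n^{(d+\alpha)/2}$, which comes from the universal tightness theorem of \cite{hutchcroft2020power} (a cluster-size estimate), not merely from two-point function upper bounds as you suggest.
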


\begin{remark}
The fact that $\chi(\beta)\asymp \xi(\beta)^\alpha$ when $\alpha<1$ is an easy consequence of the results of \cite{hutchcroft2022sharp,baumler2022isoperimetric}; see \cref{lem:correlation_length} below. Since $2-\eta=\alpha$ in the same regime, it may be thought of as a rigorous instance of Fisher's relation $\gamma=(2-\eta)\nu$ \cite[Eq.\ 9.21]{grimmett2010percolation}, where $\nu$ is the correlation length exponent $\xi(\beta)\approx |\beta-\beta_c|^{-\nu}$.
\end{remark}

\begin{remark}
Similar sharp upper bounds on the slightly subcritical two-point function for \emph{nearest-neighbour} percolation in high dimensions were proven by the author, Michta, and Slade in \cite{hutchcroft2023high}; in that case one interpolates between $\|x-y\|^{-d+2}$ decay and exponential decay around a correlation length of order $|p-p_c|^{-2}$. It remains open to prove a matching lower bound. The methods of \cite{hutchcroft2023high} are completely different to those of this paper, and rely on estimates for critical percolation proven via the lace expansion including the half-space two-point function estimates of \cite{chatterjee2020restricted}.
\end{remark}

\section{Proof}

We will assume throughout the paper that the reader is familiar with the FKG and BK inequalities and the attendant notion of the \emph{disjoint occurence} of two increasing events, referring them to \cite[Chapter 2.3]{grimmett2010percolation} otherwise.

\subsection{Pointwise upper bounds on the critical two-point function}

In this section we prove the following proposition, and hence the upper bound of \cref{thm:main_pointwise}.

\begin{prop}
\label{prop:main_pointwise_upper}
Let $d\geq 1$, let $J:\Z^d\times \Z^d \to [0,\infty)$ be a symmetric, integrable, translation-invariant kernel, and suppose that there exist constants $0<\alpha<d$ and $c>0$ such that $c \|x-y\|^{-d-\alpha} \leq J(x,y) \leq C\|x-y\|^{-d-\alpha}$ for all distinct $x,y \in \Z^d$. If $\alpha<1$ then
 % there exists a positive constant $A=A(d,\alpha,c,C)$ such that
\[
\P_{\beta_c}(0\leftrightarrow x) \preceq \|x\|^{-d+\alpha}
\] 
for every $x\in \mathbb{Z}^d$. 
\end{prop}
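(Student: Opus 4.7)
The approach I would take combines the BK inequality with the random box-exit decomposition of Bäumler--Berger \cite{baumler2022isoperimetric} and the spatially-averaged upper bound of \cite{hutchcroft2022sharp}, arranged as an inductive bootstrap on scales.

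The starting point is a box-exit BK bound. Writing $F(y) := \P_{\beta_c}(0 \leftrightarrow y)$ and fixing $x$ with $\|x\|=r$ large, I would choose a box $B = B(0,R)$ with $R$ comparable to $r$ (tuned by a large constant $L$). Any open path from $0$ to $x \notin B$ must exit $B$ along some edge $\{u,v\}$ with $u \in B$ and $v \notin B$; BK together with $1-e^{-\beta_c J(u,v)} \le \beta_c J(u,v)$ then gives
\[
\P_{\beta_c}(0 \leftrightarrow x) \;\le\; \beta_c \sum_{u \in B,\, v \notin B} F(u)\, J(u,v)\, F(x-v).
\]
I would decompose the sum over $v$ into three regimes: (a) $v$ close to $x$, meaning $\|v-x\| \le r/L$; (b) $v$ far from both $0$ and $x$, meaning $\|v\| > 2r$; and (c) the intermediate regime.

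In regime (a), $J(u,v) \asymp r^{-d-\alpha}$ because $\|u-v\| \asymp r$, and the spatially-averaged bound applied to $\sum_{u\in B}F(u) \preceq R^{\alpha}$ and $\sum_{w \in B(0,r/L)} F(w) \preceq (r/L)^{\alpha}$ yields a contribution of order $L^{-2\alpha} r^{-d+\alpha}$, capturing the dominant ``single long edge'' mechanism. In regime (b), a dyadic decomposition over shells $\{v : \|v\| \asymp 2^k\}$ with $2^k > 2r$, combined with $J(u,v)\asymp \|v\|^{-d-\alpha}$ and the averaged bound $\sum_{v:\|x-v\| \le 2^{k+1}} F(x-v) \preceq (2^k)^{\alpha}$, produces a telescoping sum $\sum_k (2^k)^{-d}$ that converges like $r^{-d}$, so this regime contributes at most $L^{-\alpha} r^{-d+\alpha}$ without using any pointwise information on $F$.

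The delicate case is (c), where $v \notin B$, $\|v-x\| > r/L$, and $\|v\| \le 2r$, so that $y=x-v$ can be on the same scale $r$ as $x$ itself. To close the argument, I would set up a bootstrap on
$
M_n := \sup_{1 \le \|y\| \le n} \|y\|^{d-\alpha} F(y),
$
seeking a self-improvement inequality of the form $M_n \le C + \delta M_n$ valid with $\delta < 1$ once $L$ is large enough, which immediately yields $M_n \le C/(1-\delta)$ uniformly in $n$. The main obstacle is extracting the contraction factor $\delta < 1$: a naive application of the pointwise bound inside the intermediate-regime sum produces prefactor $1$, because the contributions from $v$ close to $u$ (with $J(u,v)$ large) and from $v$ with $\|x-v\|\asymp r$ are each of exactly the desired order $r^{-d+\alpha}$ times $M_n$. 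Resolving this is where the \emph{random} nature of the box exit and the hypothesis $\alpha<1$ become essential: integrating the decomposition over a range of box radii (rather than a single $R$) smooths out boundary effects, and the condition $\alpha<1$ ensures that the relevant Riesz-type convolution $\sum_v J(u,v)\|x-v\|^{-d+\alpha}$ is dominated by its ``long-edge'' tail rather than by the short-range singular part, producing the missing small prefactor. The restriction $\alpha<1$ is crucial precisely because, for $\alpha \ge 1$, short-range effects are competitive and the bootstrap would not contract.
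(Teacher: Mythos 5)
Your high-level strategy is correct and matches the paper: BK box-exit decomposition, the spatially-averaged upper bound of \cite{hutchcroft2022sharp}, and an inductive bootstrap on scales. You have also correctly identified the central obstruction — a naive bound on the intermediate regime produces a prefactor of order one, with no contraction — and correctly attributed the resolution to averaging over the box location together with the hypothesis $\alpha<1$. But the proposal stops exactly where the work is: it does not exhibit the quantitative gain that averaging actually delivers, and the regime decomposition you use makes it unclear how the bootstrap would close.

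The missing computation is the following. Averaging the box-exit bound over translates $z$ of the box of radius $\asymp 2^k$ shows that the pair $(u,v)$ can straddle the boundary of $\Lambda^z_{k-2}$ only for a fraction $\preceq \min\{1,\langle u-v\rangle/2^k\}$ of choices of $z$ (because some coordinate of $z$ must sit between $u_i$ and $v_i$). So the averaged decomposition reads
\[
\P_\beta(0\leftrightarrow x) \preceq \sum_{u,v}\langle u-v\rangle^{-d-\alpha}\,\P_\beta(0\leftrightarrow u)\,\P_\beta(v\leftrightarrow x)\,\min\Bigl\{1,\frac{\langle u-v\rangle}{2^k}\Bigr\}.
\]
The right way to split is not by the position of $v$ but by the size of $\|u-v\|$. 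When $\|u-v\|\geq \eps 2^k$ the $\min$ is of order one but $\langle u-v\rangle^{-d-\alpha}$ is small, and the spatially-averaged bound on both $u$ and $v$ sums closes this term with a prefactor $\eps^{1-d-\alpha}$ (large but independent of $A_{k-1}$). When $1\leq \|u-v\|< \eps 2^k$ the $\min$ factor contributes $\langle u-v\rangle/2^k$, and the near-diagonal sum becomes $\sum_{r\leq \eps 2^k}r^{d-1}\cdot r^{1-d-\alpha}\preceq \eps^{1-\alpha}2^{(1-\alpha)k}$ — here $\alpha<1$ is used, and this is precisely where the small contraction factor $\eps^{1-\alpha}$ comes from. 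Crucially, on this near-diagonal piece the constraint $v\notin\Lambda_{k-3}$ forces $u\notin\Lambda_{k-4}$, so one may apply the inductive pointwise bound $\P(0\leftrightarrow u)\preceq A_{k-1}2^{-(d-\alpha)k}$; this is the step that turns the bound into $A_{k+1}\leq \max\{A_k, C_0+\tfrac12 A_k\}$ once $\eps$ is chosen small. Your decomposition (a)/(b)/(c) by position of $v$ does not isolate the near-diagonal singularity, and does not automatically place $u$ far from the origin, so as written the bootstrap inequality would not close; the reorganisation by $\|u-v\|$ together with the explicit $\min\{1,\langle u-v\rangle/2^k\}$ factor is the missing ingredient.
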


 As mentioned in the introduction, the proof relies on a method introduced by B\"aumler and Berger~\cite{baumler2022isoperimetric} in which we consider the first edge an open path uses to exit a \emph{randomly translated box}. (In fact \cite{baumler2022isoperimetric} uses a box of \emph{random radius}; the difference is not important.) This method is encapsulated by the following lemma. For each $k\geq 0$, we write $\Lambda_k = [-2^k,2^k]^d=\{x\in \Z^d:\|x\|\leq 2^k\}$ for the box of side length $2^{k+1}+1$ centred at the origin. Recall that we write $\langle x \rangle=\max\{2,\|x\|\}$.

\begin{lemma}[Random box exit decomposition]
\label{lem:random_box_convolution}
Let $d\geq 1$, let $J:\Z^d\times \Z^d \to [0,\infty)$ be a symmetric, integrable, translation-invariant kernel, and suppose that there exist constants $0<\alpha<d$ and $C<\infty$ such that $J(x,y) \leq C\|x-y\|^{-d-\alpha}$ for all distinct $x,y \in \Z^d$. If $\alpha<1$ then
\begin{equation}
\P_\beta(0 \leftrightarrow x) \preceq \beta \sum_{u\in \Lambda_{k-1}} \sum_{v\in \Z^d \setminus \Lambda_{k-3}} \langle u-v\rangle^{-d-\alpha} \P_\beta(0\leftrightarrow u) \P_\beta(v\leftrightarrow x) \min\left\{1,\frac{\langle u-v\rangle}{2^k} \right\}
\label{eq:random_box_convolution}
\end{equation}
for every $k\geq 3$, $\beta>0$, and $x\in \Z^d \setminus \Lambda_{k-1}$.
\end{lemma}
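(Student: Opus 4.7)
The plan is to use a random translate of a box to decompose the event $\{0 \leftrightarrow x\}$ according to the first edge of an open path exiting the translated box; averaging over the translate will produce the symmetric-difference factor $\min\{1,\langle u-v\rangle/2^k\}$. Specifically, I would let $Z$ be uniformly distributed on $\Lambda_{k-3}$ and set $B_Z := Z + \Lambda_{k-2}$. A direct application of the triangle inequality (using $\|w\|_\infty,\|Z\|_\infty \le 2^{k-3}$ and $\|w\|_\infty \le 2^{k-2}$ respectively) shows that $\Lambda_{k-3} \subseteq B_Z \subseteq \Lambda_{k-1}$ for every $Z \in \Lambda_{k-3}$, so $0 \in B_Z$ deterministically, and $x \notin B_Z$ deterministically under the hypothesis $x\in\Z^d\setminus\Lambda_{k-1}$.

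For each realization of $Z$, any open self-avoiding path from $0$ to $x$ has a well-defined first edge $\{u,v\}$ with $u\in B_Z$ and $v\notin B_Z$; the prefix up to $u$ uses only vertices of $B_Z$, the edge $\{u,v\}$ is open, and the suffix is a further connection to $x$, with the three witness edge sets pairwise disjoint. This yields the event inclusion
\begin{equation*}
\{0\leftrightarrow x\}\;\subseteq\;\bigcup_{u\in B_Z,\,v\notin B_Z} \bigl\{0\xleftrightarrow{B_Z} u\bigr\}\circ\bigl\{\{u,v\}\text{ is open}\bigr\}\circ\{v\leftrightarrow x\},
\end{equation*}
to which I would apply a union bound followed by the BK inequality on three increasing events, together with the monotone bound $\P_\beta(0\xleftrightarrow{B_Z} u) \le \P_\beta(0\leftrightarrow u)$, to obtain
\begin{equation*}
\P_\beta(0\leftrightarrow x) \;\le\; \sum_{u\in B_Z,\,v\notin B_Z} \P_\beta(0\leftrightarrow u)\cdot\bigl(1-e^{-\beta J(u,v)}\bigr)\cdot\P_\beta(v\leftrightarrow x)
\end{equation*}
for each realization of $Z$.

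Taking expectation over $Z$ and swapping the order of summation then introduces the factor $\pi(u,v) := \P_Z(u\in B_Z,\,v\notin B_Z)$ for each pair $(u,v) \in \Z^d\times\Z^d$. The inclusions above force $\pi(u,v)=0$ unless $u\in\Lambda_{k-1}$ and $v\in\Z^d\setminus\Lambda_{k-3}$, which produces exactly the range of summation in the conclusion. Combined with the standard bound $1-e^{-\beta J(u,v)} \le \beta J(u,v) \preceq \beta\langle u-v\rangle^{-d-\alpha}$, the remaining content of the lemma reduces to the geometric estimate
\begin{equation*}
\pi(u,v)\;\preceq\;\min\!\Bigl\{1,\,\frac{\langle u-v\rangle}{2^k}\Bigr\}.
\end{equation*}

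I expect this geometric step to be the main — but not especially deep — obstacle. Writing $\pi(u,v) = |\Lambda_{k-3}\cap(u+\Lambda_{k-2})\setminus(v+\Lambda_{k-2})|/|\Lambda_{k-3}|$ (using symmetry of $\Lambda_{k-2}$), the numerator is bounded by the volume of the symmetric difference of two $\ell^\infty$-cubes of side $\asymp 2^{k-1}$ at displacement $u-v$, which is $O(\|u-v\|\cdot 2^{(k-1)(d-1)})$ when $\|u-v\|\le 2^{k-1}$ and bounded by $|\Lambda_{k-3}|$ unconditionally. Dividing by $|\Lambda_{k-3}|\asymp 2^{(k-3)d}$ yields the stated bound once constants are absorbed into $\preceq$. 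The design of the two scales $k-3$ (translate domain) and $k-2$ (half-side of the box) is essentially forced: a smaller translate domain would violate $\Lambda_{k-3}\subseteq B_Z$, costing us the restriction $v\notin\Lambda_{k-3}$, while a larger domain would break $B_Z\subseteq\Lambda_{k-1}$ and hence the guarantee $x\notin B_Z$; the present choice just balances these constraints.
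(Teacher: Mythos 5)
Your proposal is correct and follows essentially the same route as the paper's proof: translate a box of half-side $2^{k-2}$ by a uniform $Z\in\Lambda_{k-3}$, decompose $\{0\leftrightarrow x\}$ along the first exit edge $\{u,v\}$ of an open path, apply BK together with $1-e^{-\beta J(u,v)}\preceq\beta\langle u-v\rangle^{-d-\alpha}$, and then average in $Z$ to produce the factor $\min\{1,\langle u-v\rangle/2^k\}$. The only cosmetic difference is in the geometric step: the paper bounds the averaged indicator by arguing that some coordinate of $z$ must lie ``between'' those of $u$ and $v$, whereas you bound it by the volume of the symmetric difference of two shifted $\ell^\infty$-cubes; both give $O(\|u-v\|2^{k(d-1)})/|\Lambda_{k-3}|$ and your version is if anything the cleaner and more obviously airtight way to state it.
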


\begin{proof}[Proof of \cref{lem:random_box_convolution}]
Fix $k\geq 3$ and $x\in \Z^d$ with $2^{k-1}<\|x\|\leq 2^k$. For each $z\in \Z^d$, let $\Lambda^z_{k-2}=z+\Lambda_{k-2}=\{y\in \Z^d: \|y-z\| \leq 2^{k-2}\}$ be the box of sidelength $2^{k-1}+1$ centered at $z$, so that if $z\in \Lambda_{k-3}$ then 
\[\Lambda_{k-3} \subseteq \Lambda^z_{k-2} \subseteq \Lambda_{k-1}.\] In particular, if $z\in \Lambda_{k-3}$ then $\Lambda^z_{k-2}$ contains $0$ and does not contain $x$, so that if $0$ is connected to $x$ by some simple open path $\gamma$ then there must exist distinct vertices $u\in \Lambda^z_{k-2}$ and $v\in \Z^d\setminus \Lambda^z_{k-2}$ such that the event $\{0\leftrightarrow u\} \circ \{\{u,v\} \text{ open}\} \circ \{v \leftrightarrow x\}$ occurs. (Indeed, this holds with $\{u,v\}$ equal to the edge crossed by $\gamma$ as it leaves the box for the first time.) Applying a union bound and the BK inequality, we deduce that
\[
\P_\beta(0 \leftrightarrow x) \leq \sum_{u,v\in \Z^d} \mathbbm{1}(u\in \Lambda^z_{k-2},v\notin \Lambda^z_{k-2}) \beta \langle u-v\rangle^{-d-\alpha} \P_\beta(0\leftrightarrow u) \P_\beta(v\leftrightarrow x)
\]
for every $z\in \Lambda_{k-3}$. 
 Now, observe that in order for the indicator $\mathbbm{1}(u\in \Lambda_z,v\notin \Lambda_z)$ to be one, there must exist a coordinate $i\in \{1,\ldots,d\}$ such that $u_i \leq z_i \leq v_i$ or $v_i \leq z_i \leq u_i$, so that
\[
\frac{1}{|\Lambda_{k-3}|} \sum_{z\in \Lambda_{k-3}} \mathbbm{1}(u\in \Lambda_z,v\notin \Lambda_z) \preceq \min\left\{1, \frac{\langle u-v\rangle}{2^k}\right\}.
\]
Thus, averaging the above inequality over $z\in \Lambda_{k-2}$, we obtain that
\begin{equation*}
\P_\beta(0 \leftrightarrow x) \preceq \sum_{u\in \Lambda_{k-1}} \sum_{v\in \Z^d \setminus \Lambda_{k-3}} \langle u-v\rangle^{-d-\alpha} \P_\beta(0\leftrightarrow u) \P_\beta(v\leftrightarrow x) \min\left\{1,\frac{\langle u-v\rangle}{2^k} \right\},
\end{equation*}
 where the restrictions $u\in \Lambda_{k-1}$ and $v\notin \Lambda_{k-3}$ are required for $u$ to belong to any of the boxes $\Lambda^z_{k-2}$ with $z\in \Lambda_{k-2}$. 
\end{proof}

The rest of the proof is purely analytic, showing that the two inequalities \eqref{eq:spatially_averaged} and \eqref{eq:random_box_convolution} imply a pointwise version of \eqref{eq:spatially_averaged} when $\alpha<1$.

\begin{proof}[Proof of \cref{prop:main_pointwise_upper}]
For each $k\geq 0$ let $A_k$ be minimal such that
\[
\P_{\beta_c}(0\leftrightarrow x) \leq A_k \langle x\rangle^{-d+\alpha} \qquad \text{ for every $x\in \Lambda_k$.}
\]
We claim that there exists a constant $C_0=C_0(d,\alpha,C,c)$ such that
\[
A_{k+1} \leq \max\left\{A_k, C_0 + \frac{A_k}{2}\right\}
\]
for every $k\geq 3$.
Once this is established, it will follow by induction on $k$ that
\[
A_\infty:=\sup_k A_k \leq \max\{A_3,2C_0\}<\infty
\]
as required.  

\medskip

Fix $k\geq 3$ and $x\in \Z^d$ with $2^{k-1}<\|x\|\leq 2^k$.
We will analyze \eqref{eq:random_box_convolution} by breaking the sum into three parts: One involving points $u$ and $v$ that are well-separated from each other and have distance $O(2^k)$ from the origin, which we will bound using \eqref{eq:spatially_averaged}, one involving points $u$ and $v$ that are close to each other, which we will bound in terms of $A_{k-1}$, and a final term involving points $v$ that are very far from both the origin and $x$, which we will bound using \eqref{eq:random_box_convolution}. We will introduce a parameter $\eps$ controlling the scale of closeness between $u$ and $v$ at which we split between the first two sums, which we will eventually take to be a small constant. We now do this precisely.
Let $0<\eps \leq 2^{-4}$ be a small parameter to be optimized over later, and consider the three sums
\begin{align*}
\textsc{I}(\eps) &:= 2^{-k} \sum_{u\in \Lambda_{k-1}} \sum_{v \in \Lambda_k} \mathbbm{1}(\|u-v\|\geq \eps 2^k) \langle u-v\rangle^{1-d-\alpha} \P_\beta(0\leftrightarrow u) \P_\beta(v\leftrightarrow x)\\
\textsc{II}(\eps) &:= 2^{-k} \sum_{u\in \Lambda_{k-1}} \sum_{v\in \Lambda_k \setminus \Lambda_{k-3}} \mathbbm{1}(1\leq \|u-v\|< \eps 2^k) \langle u-v\rangle^{1-d-\alpha} \P_\beta(0\leftrightarrow u) \P_\beta(v\leftrightarrow x)\\
\textsc{III} &:= \sum_{u\in \Lambda_{k-1}} \sum_{v \in \Z^d \setminus \Lambda_k}  \langle u-v\rangle^{-d-\alpha} \P_\beta(0\leftrightarrow u) \P_\beta(v\leftrightarrow x),
\end{align*}
so that
 \[\P_\beta(0\leftrightarrow x) \preceq \textsc{I}(\eps) +  \textsc{II}(\eps) + \textsc{III}.\]
 (It is important both here and in the rest of the proof that none of the implicit constants depend on the choice of $0<\eps \leq 1/16$.)
For the first term, we apply \eqref{eq:spatially_averaged} to obtain that
\begin{multline}
\textsc{I}(\eps) \leq 2^{-k} (\eps 2^k)^{1-d-\alpha} \sum_{u\in \Lambda_{k-1}} \sum_{v\in \Lambda_k} \P_\beta(0\leftrightarrow u) \P_\beta(v\leftrightarrow x) 
\\\preceq \eps^{1-d-\alpha} 2^{-(d+\alpha)k} (2^{\alpha k})^2 
= \eps^{1-d-\alpha} 2^{-(d-\alpha)k}.
\label{eq:I}
\end{multline}
For the second term, we note that if $v\notin \Lambda_{k-3}$ and $\|u-v\| < \eps 2^k \leq 2^{k-4}$ then $u \notin \Lambda_{k-4}$. Using the bound $\P_\beta(0\leftrightarrow u) \leq A_{k-1} \|u\|^{-d+\alpha} \preceq A_{k-1} 2^{-(d-\alpha)k}$ we  deduce that
\[
\textsc{II}(\eps) \leq A_{k-1} 2^{-(d+1-\alpha)k} \sum_{u\in \Z^d} \sum_{v\in\Lambda_k} \mathbbm{1}(1\leq \|u-v\|< \eps n) \langle u-v\rangle^{1-d-\alpha} \P_\beta(v\leftrightarrow x).
\]
By considering the number of points in the boundary of each box around $v$, we can bound the sum over $u$ appearing here as
\[
\sum_{u \in \Z^d\setminus \{v\}}   \mathbbm{1}(1\leq \|u-v\|< \eps 2^k) \langle u-v\rangle^{1-d-\alpha} \preceq \sum_{r=1}^{\lceil \eps 2^k \rceil} r^{d-1}r^{1-d-\alpha}  \preceq \eps^{1-\alpha} 2^{(1-\alpha)k},
\]
where we used the assumption that $\alpha<1$ in the final estimate. Plugging this into the upper bound for $\textsc{II}(\eps)$ above, we obtain that
\begin{equation}
\label{eq:II}
\textsc{II}(\eps) \preceq A_{k-1} \eps^{1-\alpha} 2^{-dk}  \sum_{v\in \Lambda_k} \P_\beta(x \leftrightarrow v) \preceq  A_{k-1} \eps^{1-\alpha} 2^{-(d-\alpha)k}.
\end{equation}
  Finally, we can bound 
\begin{align}
\textsc{III} &\leq \sum_{u\in \Lambda_{k-1}} \sum_{\ell=1}^\infty \sum_{v\in \Lambda_{k+\ell} \setminus \Lambda_{k+\ell-1}} \langle u-v\rangle^{-d-\alpha} \P_\beta(0\leftrightarrow u) \P_\beta(v\leftrightarrow x) \nonumber\\
&\preceq \sum_{u\in \Lambda_{k-1}} \sum_{\ell=1}^\infty \sum_{v\in \Lambda_{k+\ell} \setminus \Lambda_{k+\ell-1}} 2^{-(d+\alpha)(k+\ell)} \P_\beta(0\leftrightarrow u) \P_\beta(v\leftrightarrow x) \nonumber\\
&\preceq 2^{\alpha k} \sum_{\ell =1}^\infty 2^{-(d-\alpha)(k+\ell)}  \cdot 2^{\alpha(k+\ell)} \preceq 2^{-(d-\alpha)k}. \label{eq:III}
\end{align}
Putting these three estimates together, we obtain that there exists a constant $C_1=C_1(d,\alpha,C,c)\geq 1$ such that
\[
\P_\beta(0\leftrightarrow x) \leq C_1 \left[\eps^{1-d-\alpha}+\eps^{1-\alpha} A_{k-1} + 1 \right] \langle x\rangle^{-d+\alpha}
\]
for every $0<\eps\leq 1/16$ and $x\in \Lambda_k \setminus \Lambda_{k-1}$. It follows from the definitions that
% Of course the same bound also holds for $x\in \Lambda_k$ by the definition of $A_{k-1}$, so that
\[
A_k \leq \max\left\{A_{k-1},C_1 \left[\eps^{1-d-\alpha}+\eps^{1-\alpha} A_{k-1} + 1 \right] \right\},
\]
for every $0\leq \eps \leq 1/16$ and $k\geq 0$, and the claim follows by taking $0<\eps \leq 1/16$ sufficiently small that $C_1 \eps^{1-\alpha} \leq 1/2$.
\end{proof}

\subsection{Upper bounds on the slightly subcritical two-point function}

In this section we prove the following proposition, which yields the upper bound of \cref{thm:subcritical}.

\begin{prop}[The slightly subcritical two-point function]
\label{prop:subcritical_upper}
Let $d\geq 1$, let $J:\Z^d\times \Z^d \to [0,\infty)$ be a symmetric, integrable, translation-invariant kernel, and suppose that there exist constants $0<\alpha<d$ and $c>0$ such that $c \|x-y\|^{-d-\alpha} \leq J(x,y) \leq C\|x-y\|^{-d-\alpha}$ for all distinct $x,y \in \Z^d$. If $\alpha<1$ then $\chi(\beta)\asymp \xi(\beta)^\alpha$ for every $0\leq \beta \leq \beta_c$ and 
% there exist constants $A=A(d,\alpha,c,C)$ and $A'=A'(d,\alpha,c,C)$ such that
\[
\P_{\beta}(0\leftrightarrow x) \preceq \Biggl\{\begin{matrix} \|x\|^{-d+\alpha} 
& \text{ if }\|x\|\leq \xi(\beta) \\
\chi(\beta)^2 \|x\|^{-d-\alpha} & \text{ if }\|x\|> \xi(\beta) 
\end{matrix}\Biggr.
% \Biggr\}
% \asymp \|x\|^{-d+\alpha} \left(1 \vee \frac{\|x\|}{\xi(\beta)}\right)^{-2\alpha}
\] 
for every $0<\beta \leq \beta_c$ and $x\in \mathbb{Z}^d$. 
\end{prop}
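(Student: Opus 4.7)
The claim $\chi(\beta) \asymp \xi(\beta)^\alpha$ is the content of \cref{lem:correlation_length}, which combines the spatially-averaged upper bound \eqref{eq:spatially_averaged} with the pointwise lower bound of \cite{baumler2022isoperimetric}; I would take it as a black box. Once it is in hand, the case $\|x\| \leq \xi(\beta)$ of the pointwise bound is immediate from monotonicity $\P_\beta \leq \P_{\beta_c}$ combined with the already-established pointwise critical estimate \cref{prop:main_pointwise_upper}, which gives $\P_\beta(0 \leftrightarrow x) \leq \P_{\beta_c}(0 \leftrightarrow x) \preceq \|x\|^{-d+\alpha}$.

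All the work then goes into the regime $\|x\| > \xi(\beta)$, where the target $\P_\beta(0 \leftrightarrow x) \preceq \chi^2 \|x\|^{-d-\alpha}$ encodes the Ornstein--Zernike heuristic that a typical above-correlation-length connection consists of a cluster of size $\chi$ near $0$, one long edge of length $\asymp \|x\|$, and a cluster of size $\chi$ near $x$. I would set this up as an induction on the scale of $\|x\|$: in the intermediate range $\xi < \|x\| \leq C_0 \xi$ the critical bound $\|x\|^{-d+\alpha}$ already matches $\chi^2 \|x\|^{-d-\alpha} \asymp \xi^{\alpha-d}$ up to constants and suffices as the base case. For $\|x\| > C_0 \xi$, the plan is to apply the random box exit decomposition \cref{lem:random_box_convolution} at the correlation length scale $2^k \asymp \xi$, which reduces the bound to showing
\[
S(u) := \sum_{v \notin \Lambda_{k-3}} \langle u-v\rangle^{-d-\alpha} \min\!\left\{1, \frac{\langle u-v\rangle}{\xi}\right\} \P_\beta(v \leftrightarrow x) \preceq \chi \|x\|^{-d-\alpha}
\]
uniformly in $u \in \Lambda_{k-1}$, since $\sum_{u \in \Lambda_{k-1}} \P_\beta(0 \leftrightarrow u) \asymp \chi$. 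The dominant contribution should come from $v$ within $\xi$ of $x$: in this range $\langle u-v\rangle \asymp \|x\|$, the kernel factor is $\asymp \|x\|^{-d-\alpha}$, the $\min$ equals $1$, and $\sum_{\|v-x\| \leq \xi} \P_\beta(v \leftrightarrow x) \asymp \chi$ by the definition of $\xi$, yielding exactly $\chi \|x\|^{-d-\alpha}$.

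The main obstacle will be controlling the bulk contributions, where $v$ lies at an intermediate distance from both $0$ and $x$. For such $v$ with $\xi < \|v-x\| < \|x\|$, I would bound $\P_\beta(v \leftrightarrow x) = \P_\beta(0 \leftrightarrow x-v)$ using the inductive hypothesis $\leq M \chi^2 \|v-x\|^{-d-\alpha}$ (where $M$ is the constant being bootstrapped), splitting the remaining $v$-sum dyadically by $\|u-v\|$ and $\|v-x\|$ and reverting to the critical bound whenever $\|v-x\| \leq \xi$. The hypothesis $\alpha < 1$ enters here exactly as it did in the proof of \cref{prop:main_pointwise_upper}, through the estimate $\sum_{\|w\| \leq \xi} \|w\|^{1-d-\alpha} \preceq \xi^{1-\alpha}$, which forces the near-diagonal contribution weighted by the $\min$-factor to be of size $\xi^{-\alpha} \asymp 1/\chi$, converting the prefactor $M \chi^2$ back into $M \chi$. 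As in the proof of \cref{prop:main_pointwise_upper}, I anticipate having to introduce a free parameter $\eps$ splitting the $v$-sum at an intermediate scale, to be optimized at the end so that the multiplicative constant in front of $M$ in the resulting bootstrap inequality $M \leq C + C'(\eps) M$ is strictly less than $1$, thereby closing the induction.
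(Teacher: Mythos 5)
Your treatment of $\chi \asymp \xi^\alpha$ and the case $\|x\| \leq \xi$ matches the paper exactly. The gap is in the regime $\|x\| \gg \xi$, and it is a real one: the single random-box decomposition at scale $2^k \asymp \xi$ cannot produce a small multiplicative constant in the bootstrap inequality, no matter where you place a split parameter $\eps$ in the $v$-sum. To see this concretely, consider the contribution to $S(u)$ from $v$ near $u$ (say $\|u-v\| \leq \|x\|/2$, so $\|v-x\| \asymp \|x\|$ and you must use the bootstrap hypothesis $\P_\beta(v \leftrightarrow x) \leq M\chi^2 \|x\|^{-d-\alpha}$). The remaining $v$-sum is $\sum_v \langle u-v\rangle^{-d-\alpha} \min\{1, \langle u-v\rangle/\xi\}$, which is $\asymp \xi^{-\alpha} \asymp \chi^{-1}$ with a constant of order one: the parts $\|u-v\| \leq \xi$ and $\|u-v\| \geq \xi$ each contribute $\asymp \xi^{-\alpha}$, so splitting this range at an intermediate scale cannot shrink the total. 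You therefore land on $M \leq C + C' M$ with $C'$ a fixed constant, possibly $\geq 1$, and the bootstrap does not close. What is missing is a free parameter in the \emph{box size}, not the $v$-split: the paper uses boxes of radius $r\xi(\beta)$ with $r$ a large constant, so that the $\min$-factor integrates to $(r\xi)^{-\alpha} \asymp r^{-\alpha}\chi^{-1}$ and the constant in front of $M$ becomes $\preceq r^{-2\alpha}$ (the exponent $2\alpha$ rather than $\alpha$ comes from using a \emph{symmetric} two-box decomposition, with boxes of radius $r\xi$ around both $0$ and $x$ and the elementary \cref{lem:threshold_convolution} to handle the resulting two-fold convolution). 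The paper also separates out the event $\mathscr{L}_x$ that the connecting path uses an edge of length $\geq \|x\|/16$, which it controls directly by BK and $\chi < \infty$ in \eqref{eq:Lx}; after removing this event, both exit edges of the two boxes have length $\ll \|x\|$, which is needed for \cref{lem:threshold_convolution} to apply.

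There is a secondary organizational issue with running this as a forward induction on the dyadic scale of $\|x\|$: the sum $S(u)$ also contains $v$ with $\|v-x\| > \|x\|$, for which you would need the target bound at a \emph{larger} scale than the one you are currently establishing. The paper sidesteps this by bootstrapping a single constant $A(\beta,n) = \inf\{A \geq 1 : \P_{\beta,n}(0\leftrightarrow x) \leq A\chi^2\langle x\rangle^{-d-\alpha} \ \forall\, \|x\| \geq \xi\}$ that is uniform over all scales, and making this quantity finite a priori by first truncating the kernel at length $n$ (so that $\P_{\beta,n}$ has exponential decay) and then letting $n \to \infty$ at the end. Without some device of this kind your induction is circular. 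Once you adopt both fixes --- scaling the box to $r\xi$ to get a gain of $r^{-\alpha}$ (or $r^{-2\alpha}$ with two boxes), and bootstrapping a globally-defined, a priori finite constant --- the rest of your computation goes through and is essentially the calculation carried out in the paper.
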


 % our pointwise estimate on the \emph{slightly subcritical} two-point function.
  This proposition will be used to prove logarithmic divergence of the triangle diagram for $\alpha =d/3$ in the next subsection. We begin by noting how the relation $\chi(\beta)\asymp \xi(\beta)^\alpha$ follows from the results of \cite{hutchcroft2022sharp,baumler2022isoperimetric}.

\begin{lemma}
\label{lem:correlation_length}
Let $d\geq 1$, let $J:\Z^d\times \Z^d \to [0,\infty)$ be a symmetric, integrable, translation-invariant kernel, and suppose that there exist constants $0<\alpha<d$ and $C<\infty$ such that $J(x,y) \leq C\|x-y\|^{-d-\alpha}$ for all distinct $x,y \in \Z^d$. If $\alpha<1$ then there exists a constant $\eps_0=\eps_0(d,\alpha,C)$ such that the implication
\[
\left[\sum_{x\in [-n,n]^d} \P_\beta(0\leftrightarrow x) \leq \frac{\eps_0}{\beta} n^{\alpha} \right] \Rightarrow 
\left[\chi(\beta) \leq 2\sum_{x\in [-n,n]^d} \P_\beta(0\leftrightarrow x) \right]
\]
holds for all $\beta>0$ and $n\geq 1$.
\end{lemma}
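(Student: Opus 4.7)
The plan is to write $\chi(\beta) = S_n + T$ where $S_n := \sum_{x\in [-n,n]^d} \P_\beta(0\leftrightarrow x)$ and $T := \sum_{x\in \Z^d \setminus [-n,n]^d} \P_\beta(0\leftrightarrow x)$, and to prove $T \leq S_n$ under the hypothesis, which immediately yields the desired conclusion $\chi(\beta) \leq 2 S_n$. For each $x \notin [-n,n]^d$, any open path from $0$ to $x$ must cross an edge leaving any fixed box $\Lambda \subset [-n,n]^d$ that contains $0$; the BK inequality and a union bound therefore give $\P_\beta(0\leftrightarrow x) \leq \sum_{u\in \Lambda,\, v\notin \Lambda} \beta J(u,v) \P_\beta(0\leftrightarrow u)\P_\beta(v\leftrightarrow x)$.

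To obtain a bound insensitive to the precise location of the boundary of $\Lambda$, I will average over random translates of $\Lambda$ in exact analogy with the proof of \cref{lem:random_box_convolution}, which replaces the indicator $\mathbbm{1}(u\in \Lambda,\, v\notin \Lambda)$ with the smoothed factor $\min\{1,\|u-v\|/n\}$ up to an absolute multiplicative constant, while restricting $u$ to $[-n,n]^d$. Summing the resulting inequality over $x\in \Z^d\setminus [-n,n]^d$ and using the trivial bound $\sum_{x} \P_\beta(v\leftrightarrow x) \leq \chi(\beta)$ then gives
\[
T \preceq \beta\, \chi(\beta) \sum_{u\in [-n,n]^d} \P_\beta(0\leftrightarrow u) \sum_{v\in \Z^d} J(u,v) \min\left\{1, \frac{\|u-v\|}{n}\right\}.
\]
The key computation is that the inner $v$-sum is $\preceq n^{-\alpha}$ uniformly in $u$: the contribution from $\|u-v\|\leq n$ is of order $n^{-1}\sum_{r=1}^n r^{-\alpha} \preceq n^{-\alpha}$ precisely because $\alpha<1$, while the contribution from $\|u-v\|>n$ is of order $\sum_{r>n} r^{-1-\alpha} \preceq n^{-\alpha}$. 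The remaining $u$-sum is bounded by $S_n$, so altogether $T \leq C_2\, \beta\, \chi(\beta)\, n^{-\alpha} S_n$ for some constant $C_2=C_2(d,\alpha,C)$.

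Substituting $\chi(\beta)=S_n+T$ and rearranging yields $T\bigl(1 - C_2 \beta n^{-\alpha} S_n\bigr) \leq C_2 \beta n^{-\alpha} S_n^2$; the hypothesis $\beta S_n \leq \eps_0 n^\alpha$ bounds the coefficient $C_2\beta n^{-\alpha}S_n$ by $C_2\eps_0$, so choosing $\eps_0 \leq 1/(2C_2)$ gives $T \leq 2C_2\eps_0\, S_n \leq S_n$ as required. The only real subtlety is that this rearrangement is formally illegal when $\chi(\beta)=\infty$; to handle that case I would run the entire argument with the truncated susceptibility $\chi_R(\beta):=\sum_{x\in[-R,R]^d}\P_\beta(0\leftrightarrow x)$ in place of $\chi(\beta)$, using translation invariance to bound $\sum_{x\in[-R,R]^d}\P_\beta(v\leftrightarrow x)$ by $\chi_{R+\|v\|_\infty}(\beta)$, then splitting the $v$-sum by whether $\|v\|_\infty$ is small or large compared with $R$ to obtain a self-bounding inequality for $\chi_R$, and finally taking $R\to\infty$. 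This is the step where I would expect the only real technical difficulty to arise, though I anticipate it yielding to careful bookkeeping rather than any new idea.
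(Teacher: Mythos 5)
Your analytic core is right and is essentially what the cited results deliver: average the box-exit inequality over translates to replace the sharp boundary indicator by $\min\{1,\|u-v\|/n\}$, observe that $\sum_v J(u,v)\min\{1,\|u-v\|/n\}\preceq n^{-\alpha}$ precisely because $\alpha<1$, conclude $T\preceq\beta\, n^{-\alpha}S_n\chi(\beta)$, and rearrange. The paper itself does not write this out; it cites B\"aumler--Berger's Lemma~2.5 for the averaged box-exit estimate and Duminil-Copin--Tassion's $\phi_\beta(S)$ machinery (as spelled out in \cite{HutchcroftTriangle}) for the conclusion. Where you and the paper genuinely diverge is in which decomposition inequality you use, and this is not cosmetic.

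You use BK, which gives $\P_\beta(0\leftrightarrow x)\leq\sum_{u\in\Lambda,v\notin\Lambda}\beta J(u,v)\P_\beta(0\leftrightarrow u)\P_\beta(v\leftrightarrow x)$. The $\phi_\beta(S)$ argument instead uses the Simon--Lieb-type inequality with $\P_\beta(0\leftrightarrow u\text{ in }S)$ in place of $\P_\beta(0\leftrightarrow u)$, which leads to the quantity $\phi_\beta(S)=\beta\sum_{u\in S,v\notin S}J(u,v)\P_\beta(0\leftrightarrow u\text{ in }S)$. The crucial payoff of the restricted version is that $\phi_\beta(S)<1$ for a single finite $S\ni 0$ already implies $\beta<\beta_c$ (this is the content of \cite{duminil2015new}), hence $\chi(\beta)<\infty$ by sharpness, so the subsequent rearrangement $\chi(1-\phi_\beta(S))\leq\sum_{x\in S}\P_\beta(0\leftrightarrow x)$ is unconditionally legal. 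Your BK-based bound $T\leq C_2\eps_0\chi$ yields nothing when $\chi=\infty$, and the lemma must cover all $\beta>0$, including $\beta\geq\beta_c$ where $\chi=\infty$.

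Your proposed fix via $\chi_R$ does not close this gap. After bounding $\sum_{x\in[-R,R]^d}\P_\beta(v\leftrightarrow x)\leq\chi_{R+\|v\|}$, the self-bounding inequality for $\chi_R$ requires you to sum $\langle v\rangle^{-d-\alpha}\chi_{R+\|v\|}$ over $v$. Without an a priori decay estimate on $\P_\beta(0\leftrightarrow x)$ you only have $\chi_{R'}\preceq(R')^d$, under which the sum diverges; even importing the spatially averaged bound $\chi_{R'}\preceq(R')^\alpha$ from \eqref{eq:spatially_averaged} leaves $\sum_{r>R}r^{-1-\alpha}\cdot r^{\alpha}=\sum_{r>R}r^{-1}$, still divergent. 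There is no way to ``split by whether $\|v\|$ is small or large compared with $R$'' that fixes this, because any a priori bound strong enough to make the tail converge is roughly what the lemma is trying to establish. You also cannot appeal to the B\"aumler--Berger lower bound on $S_n(\beta_c)$ to rule out $\beta\geq\beta_c$ directly, since this lemma assumes only the upper bound on $J$ and the constant $\eps_0$ must not depend on $c$ or $\beta_c$. So the passage you flagged as ``careful bookkeeping'' is precisely where the one new idea --- the Duminil-Copin--Tassion $\phi_\beta(S)$ argument, i.e.\ replacing BK by the Simon--Lieb inequality with connections restricted to $S$ --- is needed, and your proof as written has a genuine gap there.
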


\begin{proof}
This is a consequence of \cite[Lemma 2.5]{baumler2022isoperimetric} together with a standard application of the ``$\phi_\beta(S)$ argument'' of \cite{duminil2015new} as explained in detail in \cite[Section 4]{HutchcroftTriangle}.
\end{proof}

\begin{corollary}
\label{cor:susceptibility_correlation_length}
Let $d\geq 1$, let $J:\Z^d\times \Z^d \to [0,\infty)$ be a symmetric, integrable, translation-invariant kernel, and suppose that there exist constants $0<\alpha<d$ and $C<\infty$ such that $J(x,y) \leq C\|x-y\|^{-d-\alpha}$ for all distinct $x,y \in \Z^d$. If $\alpha<1$ then $\chi(\beta)\asymp \xi(\beta)^\alpha$ for every $0\leq \beta <\beta_c$.
\end{corollary}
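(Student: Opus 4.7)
The plan is to establish matching upper and lower bounds on $\chi(\beta)$ of order $\xi(\beta)^\alpha$. The upper bound will combine the definition of $\xi(\beta)$ with the monotonicity of the two-point function in $\beta$ and the spatially-averaged critical estimate \eqref{eq:spatially_averaged} from \cite{hutchcroft2022sharp}; the lower bound will be extracted from the contrapositive of \cref{lem:correlation_length}. No serious technical obstacle is expected, since the substantive work has already been packaged into \cref{lem:correlation_length} and into the spatially-averaged estimates of \cite{hutchcroft2022sharp,baumler2022isoperimetric} on which it depends.

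For the upper bound, the definition of $\xi(\beta)$ gives $\sum_{x\in [-\xi(\beta),\xi(\beta)]^d} \P_\beta(0\leftrightarrow x) \geq \tfrac{1}{2}\chi(\beta)$, and combining this with the monotonicity $\P_\beta \leq \P_{\beta_c}$ and \eqref{eq:spatially_averaged} immediately yields
\[
\tfrac{1}{2}\chi(\beta) \leq \sum_{x\in [-\xi(\beta),\xi(\beta)]^d} \P_{\beta_c}(0\leftrightarrow x) \preceq \xi(\beta)^\alpha,
\]
so that $\chi(\beta) \preceq \xi(\beta)^\alpha$.

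For the lower bound, the key step will be to take the contrapositive of \cref{lem:correlation_length}, which asserts that for every $\beta>0$ and $n\geq 1$, the inequality $\chi(\beta) > 2\sum_{x\in [-n,n]^d}\P_\beta(0\leftrightarrow x)$ forces $\sum_{x\in [-n,n]^d}\P_\beta(0\leftrightarrow x) > \frac{\eps_0}{\beta}n^\alpha$. For any integer $n$ strictly less than $\xi(\beta)$, the defining property of the correlation length supplies exactly this hypothesis, and chaining the two strict inequalities gives $\chi(\beta) > \frac{2\eps_0}{\beta}n^\alpha$. Taking $n = \xi(\beta) - 1$, which is valid whenever $\xi(\beta) \geq 2$ (note that the partial sum defining $\xi(\beta)$ is piecewise constant with jumps only at integer values of $r$, so $\xi(\beta)\in \bbN$), and using $1/\beta \geq 1/\beta_c$ to absorb the $1/\beta$ factor into the implicit constant, then produces $\chi(\beta) \succeq \xi(\beta)^\alpha$. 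The degenerate regime $\xi(\beta)=1$ falls outside this argument, but is handled trivially by the bound $\chi(\beta) \geq \P_\beta(0\leftrightarrow 0) = 1 = \xi(\beta)^\alpha$, completing the proof.
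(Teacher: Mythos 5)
Your proposal is correct and matches the route the paper has in mind: the upper bound $\chi(\beta) \preceq \xi(\beta)^\alpha$ from the defining property of $\xi(\beta)$ combined with monotonicity in $\beta$ and the spatially-averaged estimate \eqref{eq:spatially_averaged}, and the lower bound from the contrapositive of \cref{lem:correlation_length} applied at $n = \xi(\beta)-1$ (together with the trivial case $\xi(\beta)=1$). The paper does not actually spell out the corollary's proof, deferring to \cref{lem:correlation_length} as an ``easy consequence,'' so your argument is a faithful unpacking of exactly what the author intended.
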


We now turn to the pointwise upper bound of \cref{thm:subcritical,prop:subcritical_upper}. 
Note that \cref{cor:susceptibility_correlation_length} implies that the two parts of the bound are of the same order when $\|x\|$ is of order $\xi(\beta)$.
We begin with the following elementary convolution estimate.

\begin{lemma}
\label{lem:threshold_convolution}
 Let $d\geq 1$ and $\alpha<1$. There exists a constant $C_1=C_1(d,\alpha)$ such that
\begin{multline*}\sum_{\substack{a\in \Z^d\\ \leq \|a\|\leq \|x\|/4}} \sum_{\substack{b\in \Z^d\\
\|b-x\|\leq \|x\|/4}}
\langle a\rangle^{-d-\alpha}\langle a-b\rangle^{-d-\alpha}\langle b-x\rangle^{-d-\alpha} \min\left\{1,\frac{\langle a\rangle}{R}\right\}\min\left\{1,\frac{\langle x-b\rangle}{R}\right\} \\\leq C_1 R^{-2\alpha} \langle x\rangle^{-d-\alpha}
\end{multline*}
for every $R\geq 1$ and $x\in \Z^d$ with $\|x\|\geq R$.
\end{lemma}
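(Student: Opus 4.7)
The plan is to exploit the fact that, for $a$ and $b$ in the prescribed ranges, the ``middle'' factor $\langle a-b\rangle^{-d-\alpha}$ is essentially constant on the scale of $\|x\|$, so that the double sum factorizes into two independent one-variable sums of the same shape, each of which is $\asymp R^{-\alpha}$.

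First, by the triangle inequality, whenever $\|a\|\le \|x\|/4$ and $\|b-x\|\le \|x\|/4$ we have
\[
\|a-b\| \;\ge\; \|x\| - \|a\| - \|b-x\| \;\ge\; \|x\|/2,
\]
so $\langle a-b\rangle^{-d-\alpha} \preceq \langle x\rangle^{-d-\alpha}$. Pulling this factor out of the sum, we bound the left-hand side by $\langle x\rangle^{-d-\alpha}$ times the product
\[
\Bigl(\,\sum_{\|a\|\le \|x\|/4}\langle a\rangle^{-d-\alpha}\min\{1,\langle a\rangle/R\}\Bigr)\cdot\Bigl(\,\sum_{\|b-x\|\le \|x\|/4}\langle b-x\rangle^{-d-\alpha}\min\{1,\langle x-b\rangle/R\}\Bigr),
\]
and after translating in the second sum by $x$, both factors become the same one-variable sum
\[
S(R,\|x\|/4):=\sum_{\|y\|\le \|x\|/4}\langle y\rangle^{-d-\alpha}\min\{1,\langle y\rangle/R\}.
\]

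Next I would show $S(R,\|x\|/4)\preceq R^{-\alpha}$ using $0<\alpha<1$ and $R\le \|x\|$. Split the sum at scale $R$: for $\|y\|\le R$, the minimum equals $\langle y\rangle/R$, giving (by a spherical-shell count)
\[
\frac{1}{R}\sum_{\|y\|\le R}\langle y\rangle^{1-d-\alpha}\;\preceq\; \frac{1}{R}\sum_{r=1}^{R} r^{-\alpha}\;\preceq\; R^{-\alpha},
\]
where we crucially use $\alpha<1$ so that the inner sum is dominated by its upper end and evaluates to $\asymp R^{1-\alpha}$. For $R<\|y\|\le \|x\|/4$, the minimum equals $1$ and
\[
\sum_{R<\|y\|\le \|x\|/4}\langle y\rangle^{-d-\alpha}\;\preceq\;\sum_{r>R} r^{-1-\alpha}\;\preceq\; R^{-\alpha},
\]
using $\alpha>0$ so that the tail converges and is controlled by its lower end. (If instead $R>\|x\|/4$, then the minimum is always $\langle y\rangle/R\le 1$ and a single application of the first estimate, with $R$ replaced by $\|x\|/4$, yields $S\preceq (\|x\|/4)^{-\alpha}\preceq R^{-\alpha}$ using $R\le \|x\|$.)

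Combining these, $S(R,\|x\|/4)^{2}\preceq R^{-2\alpha}$, and multiplying by the $\langle x\rangle^{-d-\alpha}$ that we pulled out yields the claimed bound. There is no real obstacle here; the only point demanding care is that the splitting scale $R$ may or may not lie below $\|x\|/4$, which is handled by the case distinction above, and that both ranges $\alpha<1$ (for the small-$\|y\|$ part) and $\alpha>0$ (for the large-$\|y\|$ part) are used precisely once.
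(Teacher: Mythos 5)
Your proof is correct and takes essentially the same approach as the paper: pull out the middle factor $\langle a-b\rangle^{-d-\alpha}\asymp\langle x\rangle^{-d-\alpha}$ using the triangle inequality, factorize into two identical one-variable sums, and bound each by $R^{-\alpha}$ via a spherical-shell decomposition split at scale $R$. The paper glosses over the case $R>\|x\|/4$, which you handle explicitly; otherwise the arguments coincide.
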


\begin{proof}
The $\langle a-b\rangle^{-d-\alpha}$ term is always of order $\langle x \rangle^{-d-\alpha}$. Thus, by symmetry, it suffices to prove that
\[\sum_{\substack{a\in \Z^d\\ \leq \|a\|\leq \|x\|/4}} 
\langle a\rangle^{-d-\alpha} \min\left\{1,\frac{\langle a\rangle}{R}\right\} \preceq R^{-\alpha}.\]
This can be done by summing over the possible distances of $a$ from $0$:
\[\sum_{\substack{a\in \Z^d\\ \leq \|a\|\leq \|x\|/4}} 
\langle a\rangle^{-d-\alpha} \min\left\{1,\frac{\langle a\rangle}{R}\right\} \preceq \frac{1}{R}\sum_{r=1}^R r^{d-1}r^{-d+1-\alpha}+\sum_{r=R}^{\|x\|} r^{d-1}r^{-d-\alpha} \preceq R^{-\alpha}\]
as claimed. (If $\alpha=1$ then an additional log factor appears, while if $\alpha>1$ we get a bound of $R^{-1}$ instead of $R^{-\alpha}$.)
\end{proof}

% We now apply \cref{lem:threshold_convolution} to prove \cref{thm:subcritical}.

\begin{proof}[Proof of \cref{thm:subcritical}]
Since connection probabilities are monotone in $\beta$, it suffices by \cref{prop:main_pointwise_upper} to prove that
 % there exists a constant $A=A(d,\alpha,c,C)$ such that
\[
\P_{\beta}(0\leftrightarrow x) \preceq 
 \chi(\beta)^2 \langle x\rangle^{-d-\alpha} \qquad \text{ for every $\|x\|\geq \xi(\beta)$.}
\] 
Given $n\geq 1$ and a long-range percolation configuration $\omega \in \{0,1\}^{\Z^d \times \Z^d}$, let $\omega_n$ be the configuration obtained by deleting all edges of Euclidean diameter at least $n$, and let $\P_{\beta,n}$ be the law of $\omega_n$ when $\omega$ has law $\P_\beta$. When $\beta<\beta_c$, the measure $\P_{\beta,n}$ is the law of a subcritical percolation configuration on a transitive, locally finite weighted graph, and it follows from the sharpness of the phase transition \cite{aizenman1987sharpness,MR852458,duminil2015new} that $\P_{\beta,n}(0\leftrightarrow x)$ decays exponentially in $\|x\|$ as $x\to \infty$ (with rate of decay depending on $\beta$ and $n$). As such, the constant
\[
A(\beta,n) = \inf\left\{A \geq 1: \P_{\beta,n}(0\leftrightarrow x) \leq 
A \chi(\beta)^2 \langle x\rangle^{-d-\alpha} \text{ for every $\|x\|\geq \xi(\beta)$} \right\}
\]
is finite for every $0< \beta<\beta_c$ and $n\geq 1$. We will prove that there exists a constant $C_1=C_1(d,\alpha,C,c)<\infty$ such that
\begin{equation}
\label{eq:Abetan_bootstrap}
A(\beta,n) \leq C_1 + \frac{1}{2} A(\beta,n)
\end{equation}
for every $0<\beta <\beta_c$ and $n\geq 1$; this will then imply that $A(\beta,n) \leq 2C_1$ for $0<\beta <\beta_c$ and $n\geq 1$, from which the claim will follow by continuity of measure. We fix $0\leq \beta<\beta_c$ and $n\geq 1$ and lighten notation by writing $\P=\P_{\beta,n}$. (We do not use the fact that we have deleted the long edges from the model other than to make $A(\beta,n)$ finite and allow safe rearrangement of the inequality \eqref{eq:Abetan_bootstrap}.)

Given $x\in \Z^d$, let $\mathscr{L}_x$ be the event that $0$ is connected to an edge by a simple open path that includes an edge of length (i.e., distance between endpoints) at least $\|x\|/16$. Summing over the possible locations $\{u,v\}$ of this edge and using the BK inequality, we obtain that
\begin{multline}
\P(\mathscr{L}_x) \preceq \beta \sum_{\|u-v\|\geq \|x\|/16} \langle u-v\rangle^{-d-\alpha} \P(0\leftrightarrow u)\P(v\leftrightarrow x) \\
\preceq \langle x\rangle^{-d-\alpha} \sum_{u,v\in \Z^d}\P(0\leftrightarrow u)\P(v\leftrightarrow x) = \chi(\beta)^2\langle x\rangle^{-d-\alpha}
\label{eq:Lx}
\end{multline}
for every $x\in \Z^d$ and $0<\beta \leq \beta_c$.

We now wish to bound the probability that $0$ is connected to $x$ by a simple path that does \emph{not} use any edges of length at least $\|x\|/16$.
Let $r$ be an integer parameter to be optimized over later in the proof, let $\Lambda$ be the box $\Lambda=[-r\xi(\beta),r\xi(\beta)]^d$, and let $x\in \Z^d$ satisfy $\|x\| \geq 32r \xi(\beta)$. For each $z\in \Z^d$, let $\Lambda^z=\Lambda+z$. If $\|z\| \leq r\xi(\beta)$ and $\|w-x\|\leq r\xi(\beta)$, then the boxes $\Lambda^z$ and $\Lambda^w$ are disjoint and contain $0$ and $x$ respectively. 
 On the event $\{0\leftrightarrow x \}\setminus \mathscr{L}_x$, the points $0$ and $x$ must be connected by some simple open path $\gamma$ that leaves $\Lambda^z$ for the first time via some edge $\{u,a\}$ with $\|u-a\|\leq \|x\|/8$ and enters $\Lambda^w$ for the first time via some edge $\{b,v\}$ with $\|b-v\|\leq \|x\|/8$. Summing over the possible locations of these two edges and applying the BK inequality yields that
\[
\P(\{0\leftrightarrow x\}\setminus \mathscr{L}_x) \preceq \sum_{u\in \Lambda^z} \sum_{v\in \Lambda^w} \sum_{\substack{a\in \Z^d \setminus \Lambda^z\\\|u-a\|\leq \|x\|/8}} \sum_{\substack{b\in \Z^d \setminus \Lambda^w \\\|b-v\|\leq \|x\|/8}}  \langle u-a\rangle^{-d-\alpha}
\langle v-b\rangle^{-d-\alpha}  \P(0 \leftrightarrow u) \P(v \leftrightarrow x) \P(a \leftrightarrow b)
\]
for every $z$ with $\|z\|\leq r \xi(\beta)$ and $w$ with $\|w\|\leq r\xi(\beta)$.
Averaging over $z$ and $w$, the same argument as in the proof of \cref{lem:random_box_convolution} yields that
\begin{multline*}
\P(\{0\leftrightarrow x \}\setminus \mathscr{L}_x) \preceq \sum_{\|u\|\leq 2r \xi(\beta)} \sum_{\|x-v\| \leq 2r\xi(\beta)} \sum_{\|u-a\| \leq \|x\|/4} \sum_{\|b-v\|\leq \|x\|/8}  \langle u-a\rangle^{-d-\alpha}
 \\
 \cdot \langle v-b\rangle^{-d-\alpha} \P(0 \leftrightarrow u) \P(v \leftrightarrow x) \P(a \leftrightarrow b) \min\left\{1,\frac{\|u-a\|}{r\xi(\beta)}\right\}\min\left\{1,\frac{\|v-b\|}{r\xi(\beta)}\right\}.
\end{multline*}
Since every term in the sum involves points $u$ and $v$ with $\|u-v\|\geq \frac{28}{32}\|x\|$ and hence $\|a-u\|\leq \frac{1}{4}\|x\| \leq \frac{1}{4}\|u-v\|$ and $\|b-v\|\leq \frac{1}{8}\|x\| \leq \frac{1}{4}\|u-v\|$, we can insert the estimate $\P(a \leftrightarrow b) \leq A\chi(\beta)^2 \langle a-b\rangle^{-d-\alpha}$ and apply \cref{lem:threshold_convolution} (to the sum over $a$ and $b$) to obtain that
\begin{multline*}
\P(\{0\leftrightarrow x\} \setminus \mathscr{L}_x) \preceq 
\frac{A\chi(\beta)^2}{r^{2\alpha} \xi(\beta)^{2\alpha}} \langle x \rangle^{-d-\alpha} \sum_{u,v\in \Z^d}\P(0\leftrightarrow u)\P(v \leftrightarrow x)
\\\preceq \frac{A\chi(\beta)^4}{r^{2\alpha} \xi(\beta)^{2\alpha}} \langle x \rangle^{-d-\alpha} \preceq \frac{A\chi(\beta)^2}{r^{2\alpha}} \langle x \rangle^{-d-\alpha}
\end{multline*}
for every $x$ with $\|x\|\geq 32 r\xi(\beta)$, where we used that $\chi(\beta)\asymp \xi(\beta)^\alpha$ in the last inequality. On the other hand, \cref{prop:main_pointwise_upper} implies that $\P(0\leftrightarrow x)\preceq \langle x \rangle^{-d+\alpha} \preceq r^{2\alpha}\chi(\beta)^2 \langle x \rangle^{-d-\alpha}$ for $\|x\|\leq 32 r \xi(\beta)$, and putting this together with \eqref{eq:Lx} we obtain that there exists a constant $C_1=C_1(d,\alpha,c,C)$ such that
\[
\P(0\leftrightarrow x) \leq C_1 \left(r^{2\alpha} + r^{-2\alpha}A\right) \chi(\beta)^2 \langle x \rangle^{-d-\alpha}
\] 
for every $0\leq \beta < \beta_c$ and $x\in \Z^d$. Taking $r=r(d,\alpha,c,C)$ sufficiently large that $C_1 r^{-2\alpha} \leq 1/2$ completes the proof.
\end{proof}

\subsection{Logarithmic divergence of the triangle for $d=3\alpha$}

In this section we apply \cref{prop:subcritical_upper} to prove \cref{cor:triangle}. The only thing that still requires a calculation is that the triangle diverges at most logarithmically when $\alpha=d/3<1$; the consequences regarding mean-field critical behaviour follow from the results of \cite{HutchcroftTriangle}.

\begin{proof}[Proof of \cref{cor:triangle}]
The proof will more generally allow us to bound the triangle diagram in terms of the susceptibility whenever $d\geq 1$ and $0<\alpha \leq d$ are such that the conclusions of \cref{thm:subcritical} hold; this should be the case for $\alpha$ less than the \emph{crossover value} $\alpha_c(d) \geq 1$.
We have by symmetry that
\begin{equation*}
\nabla_\beta 
\leq 3\sum_{x,y \in \Z^d} \P_{\beta}(0\leftrightarrow x)\P_{\beta}(x\leftrightarrow y)\P_{\beta}(y\leftrightarrow 0) \mathbbm{1}\left(\|x\|\leq \|y\|,\|x-y\|\right).
\end{equation*}
Note that if $\|y\|,\|x-y\|$ are both at least $\|x\|$ then they satisfy $2\|y\|\geq \|y\|+\|x-y\| - \|x\| \geq \|x-y\|$ and $2\|x-y\|\geq \|x-y\|+\|y\| - \|x\| \geq \|y\|$, so that $\frac{1}{2}\|x-y\| \leq \|y\|\leq 2\|x-y\|$. As such, considering the number of points with $\|y\|=r$ and using \cref{thm:subcritical} we obtain that if $\|x\|\leq \xi(\beta)$ then
\begin{multline*}
\sum_{y\in \Z^d} \P_{\beta}(x\leftrightarrow y)\P_{\beta}(y\leftrightarrow 0) \mathbbm{1}\left(\|x\|\leq \|y\|,\|x-y\|\right) 
\\\preceq \sum_{r=\|x\|}^{\lfloor \xi(\beta) \rfloor} r^{d-1} r^{-2d+2\alpha} + \chi(\beta)^2 \sum_{r=\lceil \xi(\beta) \rceil} r^{d-1}r^{-2d-2\alpha}
 \preceq \langle x\rangle^{-d+2\alpha} + \chi(\beta)^4 \xi(\beta)^{-d-2\alpha} \preceq \langle x\rangle^{-d+2\alpha},
\end{multline*}
where we used that $\chi(\beta)\asymp \xi(\beta)^\alpha$ in the final inequality, 
while if $\|x\|>\xi(\beta)$ then
\[
\sum_{y\in \Z^d} \P_{\beta}(x\leftrightarrow y)\P_{\beta}(y\leftrightarrow 0) \mathbbm{1}\left(\|x\|\leq \|y\|,\|x-y\|\right) \preceq \chi(\beta)^4\sum_{r=\|x\|}^\infty r^{d-1}r^{-2d-2\alpha} \preceq \chi(\beta)^4 \langle x\rangle^{-d-2\alpha}.
\]
Putting these two estimates together and using \cref{thm:subcritical} a second time we obtain that
\begin{align*}
\nabla_\beta 
&\preceq \sum_{r=1}^{\lfloor\xi(\beta)\rfloor} r^{d-1}r^{-d+\alpha} r^{-d+2\alpha} + 
\chi(\beta)^6 \sum_{r=\lceil\xi(\beta)\rceil}^{\infty} r^{d-1}r^{-d-\alpha} r^{-d-2\alpha} \\
&=
\sum_{r=1}^{\lfloor\xi(\beta)\rfloor} r^{-d-1+3\alpha} + 
\chi(\beta)^6 \sum_{r=\lceil\xi(\beta)\rceil}^{\infty} r^{-d-1-3\alpha}.
\end{align*}
The first term is $O(1)$ when $\alpha<d/3$, $O(\log \xi(\beta))$ when $\alpha=d/3$, and $O(\xi(\beta)^{3\alpha-d})$ when $\alpha>d/3$.
The second term is of order $\chi(\beta)^6 \xi(\beta)^{-d-3\alpha}$, which is $o(1)$ for $\alpha < d/3$, $O(1)$ for $\alpha = d/3$, and $O(\xi(\beta)^{3\alpha-d})$ for $\alpha>d/3$. Thus, we have that
\begin{equation}
\label{eq:triangle_correlation_length_bound}
\nabla_\beta \preceq \begin{cases}
1 & \alpha <d/3 \\
\log \xi(\beta) & \alpha= d/3\\
\xi(\beta)^{3\alpha-d} & \alpha > d/3.\\
\end{cases}
\end{equation}
It follows from the results of \cite{hutchcroft2020power,1901.10363} that $\chi(\beta)=|\beta-\beta_c|^{-O(1)}$ as $\beta\uparrow \beta_c$ whenever $\alpha <d$, and since $\chi(\beta)\asymp \xi(\beta)^\alpha$ when $\alpha<1$ we deduce that if $\alpha=d/3<1$ then $\nabla_\beta = O(\log 1/|\beta-\beta_c|)$ as $\beta\uparrow \beta_c$ as claimed.
\end{proof}

\begin{remark}
When $d\in \{1,2\}$ and $d/3<\alpha<2d/5$, the estimate \eqref{eq:triangle_correlation_length_bound} can be inserted into the differential inequality of \cite{HutchcroftTriangle} to obtain that
\[
\frac{d}{d\beta}\chi(\beta) \succeq \frac{\chi(\beta)(\chi(\beta)-\nabla_\beta)}{\nabla_\beta^2} \succeq \chi(\beta)^{2-6+\frac{2d}{\alpha}}
\]
in an appropriate interval $(\beta_c-\eps,\beta_c)$, 
and hence that
\[
\chi(\beta) =O\bigl( |\beta-\beta_c|^{-\alpha/(2d-5\alpha)} \bigr) 
\]
as $\beta \to\infty$. In particular, the exponent $\gamma$ satisfies $\gamma \leq \alpha/(2d-5\alpha)$ when it is well-defined and $\alpha<2d/5$. While this is presumably not a particularly good estimate, it does show that $\gamma \downarrow 1$ as $\alpha \downarrow d/3$. (It is an improvement on the bound $\gamma \leq (d+\alpha)/(d-\alpha)$, which follows from the bound $\delta\leq 2d/(d-\alpha)$ of \cite{hutchcroft2022sharp} together with the inequality $\gamma \leq \delta-1$ \cite{1901.10363}, only when $\alpha/d < \frac{1}{2}(\sqrt{3}-1) \approx 0.366$.)  Using the inequalities between critical exponents discussed in \cite{1901.10363,HutchcroftTriangle}, it follows that the exponents $\delta$ and $\beta$ also converge to their mean-field values of $2$ and $1$ as $\alpha \downarrow d/3$ when $d\in \{1,2\}$. It is conjectured that $\delta=(d+\alpha)/(d-\alpha)$ when $1\leq d <6$ and $d/3<\alpha<\alpha_c(d)$, but this has only been proven for the hierarchical version of the model \cite{hutchcroft2022sharp}. To our knowledge, no closed-form expressions are conjectured for $\gamma$ or $\beta$ when $d/3<\alpha<\alpha_c$.
\end{remark}

\subsection{Pointwise lower bounds}

It remains only to prove the lower bounds of \cref{thm:main_pointwise,thm:subcritical}. The proof of these lower bounds also gives a slightly stronger estimate that allows us to restrict our paths to a box of the same scale. Note that the constant $2$ can be replaced with any constant strictly greater than $1$ (with the constant $a$ depending on this choice of constant).

\begin{prop}
\label{prop:main_pointwise_lower}
Let $d\geq 1$, let $J:\Z^d\times \Z^d \to [0,\infty)$ be a symmetric, integrable, translation-invariant kernel, and suppose that there exist constants $0<\alpha<d$ and $c>0$ such that $c \|x-y\|^{-d-\alpha} \leq J(x,y) \leq C\|x-y\|^{-d-\alpha}$ for all distinct $x,y \in \Z^d$. If $\alpha<1$ then 
% there exists a positive constant $a=a(d,\alpha,c,C)$ such that
\[
\P_{\beta}(0\leftrightarrow x) \geq \P_{\beta}\bigl(0\leftrightarrow x \text{ in $[-2\|x\|,2\|x\|]^d$}\bigr)  \succeq \|x\|^{-d+\alpha} \left(1 \vee \frac{\|x\|}{\xi(\beta)}\right)^{-2\alpha}
\] 
for every $x\in \mathbb{Z}^d$ and $\beta_c/2 \leq \beta \leq \beta_c$. 
\end{prop}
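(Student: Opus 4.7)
The plan is to apply the second moment method to the number of long edges bridging disjoint small clusters around $0$ and $x$. Fix $r=\lfloor \|x\|/16\rfloor$ and set $\Lambda^{(0)}=\{y:\|y\|\le r\}$ and $\Lambda^{(x)}=\{y:\|y-x\|\le r\}$; these are disjoint subsets of $[-2\|x\|,2\|x\|]^d$. Let $C_0$ and $C_x$ be the connected components of $0$ and $x$ in the subgraphs consisting of open edges with both endpoints in $\Lambda^{(0)}$ and $\Lambda^{(x)}$ respectively; crucially $(C_0,C_x)$ is independent of the status of every edge with one endpoint in each box. Let $N$ be the number of open edges $\{u,v\}$ with $u\in C_0$ and $v\in C_x$, and observe that $\{N\ge 1\}$ forces the existence of an open path from $0$ to $x$ inside $\Lambda^{(0)}\cup\Lambda^{(x)}\subseteq [-2\|x\|,2\|x\|]^d$. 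The Paley--Zygmund inequality then gives
\[
\P_\beta\bigl(0\leftrightarrow x\text{ in }[-2\|x\|,2\|x\|]^d\bigr)\ge \P_\beta(N\ge 1)\ge \frac{(\E_\beta N)^2}{\E_\beta N^2},
\]
so the task reduces to complementary first- and second-moment estimates.

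The first moment is straightforward. By independence of $(C_0,C_x)$ from the bridging edges, together with $\beta J(u,v)\asymp \|x\|^{-d-\alpha}$ uniformly in $(u,v)\in \Lambda^{(0)}\times \Lambda^{(x)}$,
\[
\E_\beta N \asymp \|x\|^{-d-\alpha}\,\E_\beta|C_0|\,\E_\beta|C_x|,
\]
where $\E_\beta|C_0|=\sum_{u\in \Lambda^{(0)}}\P_\beta(0\leftrightarrow u \text{ in }\Lambda^{(0)})$. The lower bound $\E_\beta|C_0|\succeq \min(r,\xi(\beta))^{\alpha}$ is precisely the spatially averaged ``bulk-to-bulk'' estimate of B\"aumler--Berger \cite{baumler2022isoperimetric} (with a short additional argument to pass from the unrestricted two-point sum to the restricted one, which is standard given that the typical open path realising the sum stays near the origin). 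This yields $\E_\beta N\succeq \|x\|^{-d-\alpha}\min(\|x\|,\xi(\beta))^{2\alpha}$, which matches the target in both the $\|x\|\le \xi(\beta)$ and $\|x\|>\xi(\beta)$ regimes.

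For the second moment, conditional on $(C_0,C_x)$ the variable $N$ is a sum of independent Bernoullis, so its conditional variance is at most its conditional mean and
\[
\E_\beta N^2 \le \E_\beta N + \E_\beta\bigl[\E_\beta[N\mid C_0,C_x]^2\bigr] \preceq \E_\beta N + \|x\|^{-2d-2\alpha}\,\E_\beta|C_0|^2\,\E_\beta|C_x|^2.
\]
The remaining task is to show $\E_\beta|C_0|^2\preceq \min(r,\xi(\beta))^{3\alpha}$. The plan is to use a tree-graph inequality to obtain
\[
\E_\beta|C_0|^2\le \sum_{w\in\Z^d}\P_\beta(0\leftrightarrow w)\,\chi_{\Lambda^{(0)}}(w)^2,\qquad \chi_{\Lambda^{(0)}}(w):=\sum_{u\in\Lambda^{(0)}}\P_\beta(w\leftrightarrow u),
\]
and then split the sum into a near part (where $\chi_{\Lambda^{(0)}}(w)\preceq \min(r,\xi(\beta))^{\alpha}$ by translation-invariance and the spatially averaged upper bound of \cite{hutchcroft2022sharp}) and a far part (where the pointwise upper bound of \cref{prop:main_pointwise_upper} gives $\chi_{\Lambda^{(0)}}(w)\preceq r^d\|w\|^{-d+\alpha}$). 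Both contributions are of order $\min(r,\xi(\beta))^{3\alpha}$ by direct scaling, the ``far'' term converging under $\alpha<2d/3$.

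Combining these bounds gives $(\E_\beta N)^2/\E_\beta N^2\succeq \min(\E_\beta N,1)$, and since $\E_\beta N\preceq \|x\|^{-d+\alpha}$ is bounded (using $\alpha<d$) the right-hand side is at least a constant multiple of $\E_\beta N$, yielding the claimed lower bound. The main obstacle is the second-moment estimate: the naive tree-graph computation sketched above requires $\alpha<2d/3$, which is automatic for $d\ge 2$ but genuinely restrictive in $d=1$, where the borderline case $2/3\le \alpha<1$ must instead be handled using the refined moment bounds on $|K\cap \Lambda^{(0)}|$ established in \cite{hutchcroft2020power}. Everything else --- the Paley--Zygmund step, the factorisation of $\E_\beta N$, and the reduction to a second-moment bound on the local cluster size --- is routine once the random-box decomposition framework is in place.
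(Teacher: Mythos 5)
Your overall strategy --- a Paley--Zygmund second moment argument on the number of bridging edges joining local clusters around $0$ and $x$, with the first moment evaluated via independence and the B\"aumler--Berger bulk-to-bulk estimate --- is the same as the paper's. Where you diverge, and where the proposal has a genuine gap, is in the second-moment bound on the local cluster size $|C_0|$.

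You aim for $\E_\beta|C_0|^2 \preceq \min(r,\xi(\beta))^{3\alpha}$ via a tree-graph inequality and then assert that ``combining these bounds gives $(\E_\beta N)^2/\E_\beta N^2 \succeq \min(\E_\beta N,1)$''. This implication fails once $\alpha>d/3$. Writing $n:=\min(\|x\|,\xi(\beta))$, the collision term in your second moment estimate is $\|x\|^{-2d-2\alpha}(\E_\beta|C_0|^2)^2\preceq n^{6\alpha}\|x\|^{-2d-2\alpha}$, while $\E_\beta N\asymp n^{2\alpha}\|x\|^{-d-\alpha}$. For the Paley--Zygmund ratio to be $\succeq \E_\beta N$ you need the collision term to be $\preceq \E_\beta N$, i.e.\ $n^{4\alpha}\preceq\|x\|^{d+\alpha}$; when $\|x\|\le\xi(\beta)$ (so $n\asymp\|x\|$) this forces $3\alpha\le d$. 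Thus an $n^{3\alpha}$ bound on $\E|C_0|^2$ only closes the argument in the mean-field regime $\alpha\le d/3$. This is a strictly stronger restriction than the $\alpha<2d/3$ you identify, which is merely the convergence threshold for the far tail of your tree-graph sum (and can in any case be avoided by restricting the intermediate vertex to the box). In particular the tree-graph route breaks down not only for $d=1$, $\alpha\in[2/3,1)$ as you suspect, but already for $d=1$, $\alpha\in(1/3,1)$ and for $d=2$, $\alpha\in(2/3,1)$ --- a substantial portion of the range covered by the proposition.

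The ingredient the paper uses instead (and which you gesture towards but misdiagnose when it is needed) is the universal tightness theorem of \cite[Theorem 2.2]{hutchcroft2020power}, which gives
\[
\E_\beta\bigl[|C_0|^2\bigr] \preceq \E_\beta|C_0| \cdot \bigl(\text{median largest cluster size in the box}\bigr) \preceq n^\alpha \cdot n^{(d+\alpha)/2} = n^{(d+3\alpha)/2}.
\]
This bound is \emph{smaller} than $n^{3\alpha}$ exactly when $\alpha>d/3$, and plugging it in yields a collision term $\preceq n^{d+3\alpha}\|x\|^{-2d-2\alpha}\preceq n^{2\alpha}\|x\|^{-d-\alpha}\asymp\E_\beta N$ using only $n\le\|x\|$, with no further constraint on $\alpha$. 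So the correct fix is to use the universal tightness/median-cluster estimate uniformly for all $d\ge 1$ and $\alpha<1$, not to patch a narrow $d=1$ borderline. The rest of your outline (Paley--Zygmund set-up, independence-based factorisation of $\E_\beta N$, and the first-moment lower bound) matches the paper and is sound, but as written the argument establishes the proposition only for $\alpha\le d/3$.
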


The restriction  $\beta_c/2 \leq \beta \leq \beta_c$ allows us to safely ignore all factors of $\beta$ that might otherwise appear in our calculations.

\begin{proof}[Proof of \cref{prop:main_pointwise_lower}]
Fix $\beta_c/2\leq \beta\leq \beta_c$ and $x\in \Z^d$ with $\|x\|\geq 8$. Let $n = \lfloor \min\{\|x\|,\xi(\beta)\}/4\rfloor$, and let $\Lambda_1=\{y\in \Z^d:\|y\|\leq n\}$ and $\Lambda_2=\{y\in \Z^d:\|y-x\|\leq n\}$ be the boxes of side length $2n+1$ centred at $0$ and $x$ respectively. Consider the random variable $Z$, defined to be the number of pairs $(u,v)\in \Lambda_1 \times \Lambda_2$ such that $0$ is connected to $u$ inside $\Lambda_1$, $\{u,v\}$ is open, and $v$ is connected to $x$ inside $\Lambda_2$. If $Z>0$ then $0$ is connected to $x$ inside the box $[-2\|x\|,2\|x\|]^d$, so it suffices to prove that 
\[\P_{\beta}(Z>0) \succeq \|x\|^{-d+\alpha} \left(1 \vee \frac{\|x\|}{\xi(\beta)}\right)^{-2\alpha}.\] We will prove this by bounding the first and second moments of $Z$ and using the inequality $\P(X>0) \geq (\E X)^2/\E[X^2]$, which holds for all non-negative random variables as a consequence of Cauchy-Schwarz. For the first moment, we use indepence of the three conditions defining whether or not a pair $(u,v)$ contributes to $Z$ to get that
\begin{equation*}
\E_\beta [Z] \asymp \Biggl[\sum_{y\in [-n,n]^d}\P_{\beta}\Bigl(0\leftrightarrow y \text{ inside $[-n,n]^d$}\Bigr)\Biggr]^2 \|x\|^{-d-\alpha} 
\asymp n^{2\alpha} \|x\|^{-d-\alpha} 
\asymp \|x\|^{-d+\alpha} \left(1 \vee \frac{\|x\|}{\xi(\beta)}\right)^{-2\alpha} 
\end{equation*}
where the second estimate holds since $n \leq \xi(\beta)$. (More specifically, the upper bound follows from \eqref{eq:spatially_averaged} while the lower bound follows from \cref{lem:correlation_length}.)
% follows from \eqref{eq:spatially_averaged} and the main result of \cite{baumler2022isoperimetric}. 
 We now turn to the second moment. Let $K^1$ denote the cluster of the origin in $\Lambda_1$ and let $K^2$ denote the cluster of the origin in $\Lambda_2$. Since  $K^1$ and $K^2$ are independent of each other and of all edges with one endpoint in $\Lambda^1$ and the other in $\Lambda^2$, we have that
\[
\E_\beta\left[ \binom{Z}{2}\right] \leq \E_\beta\left[\binom{|K^1|}{2}\binom{|K^2|}{2}\right] \|x\|^{-2d-2\alpha} = \E_\beta\left[|K^1|^2\right]^2 \|x\|^{-2d-2\alpha}.
\]
Now, it follows from the \emph{universal tightness theorem} of \cite[Theorem 2.2]{hutchcroft2020power} (by a similar calculation to that of \cite[Lemma 2.9]{hutchcrofthierarchical}) that
\begin{equation*}
\E_\beta\left[|K^1|^2\right] \preceq \E_\beta |K^1| \cdot (\text{Median size of largest cluster in $\Lambda^1$ under $\P_{\beta_c}$})
\\
\preceq n^{\alpha} \cdot n^{(d+\alpha)/2},
\end{equation*}
where the upper bound $\E_\beta |K^1| \preceq n^\alpha$ follows from \eqref{eq:spatially_averaged} and the $n^{(d+\alpha)/2}$ upper bound on the median size of the largest cluster in $\Lambda^1$ follows from the proof of \cite[Proposition 2.2]{hutchcrofthierarchical} as noted in \cite[Remark 2.11]{hutchcrofthierarchical}. Putting these bounds together, we obtain that
\begin{multline*}
\E_\beta[Z^2]=\E_\beta[Z]+2\E_\beta\left[ \binom{Z}{2}\right] \\ \preceq \|x\|^{-d+\alpha} \left(1 \vee \frac{\|x\|}{\xi(\beta)}\right)^{-2\alpha} + n^{d+3\alpha}\cdot \|x\|^{-2d-2\alpha} \preceq \|x\|^{-d+\alpha} \left(1 \vee \frac{\|x\|}{\xi(\beta)}\right)^{-2\alpha}.
\end{multline*}
It follows that $\P_\beta(Z>0)\geq \E_\beta[Z]^2/\E_\beta[Z^2] \succeq \|x\|^{-d+\alpha} \left(1 \vee \frac{\|x\|}{\xi(\beta)}\right)^{-2\alpha}$ as claimed.
\end{proof}

\section*{Acknowledgements}
This work was initiated during a visit to Columbia University in March 2024 and completed mostly while the author was attending the Eighteenth Annual Workshop in Probability and Combinatorics at McGill University's Bellairs Institute in Holetown, Barbados. The work was supported by NSF grant DMS-1928930.

 \setstretch{1}
 \footnotesize{
  \bibliographystyle{abbrv}
  \bibliography{unimodularthesis.bib}
  }

\end{document}